\documentclass[a4paper,11pt]{article}

\usepackage[utf8]{inputenc}
\usepackage[T1]{fontenc}
\usepackage{lmodern}
\usepackage{microtype}
\usepackage[a4paper,textwidth=16cm,textheight=22cm,centering]{geometry}

\usepackage{mathtools}
\usepackage{amssymb,amsfonts}
\usepackage{amsthm}
\usepackage{setspace}
\usepackage{indentfirst}
\usepackage{enumitem}
\usepackage{float}
\usepackage{needspace}

\usepackage{cite}
\usepackage{color}
\usepackage{xcolor}
\usepackage{aliascnt}
\usepackage{hyperref}
\hypersetup{
  colorlinks=true,
  linkcolor=red,
  citecolor=blue,
  urlcolor=blue,
  pdftitle={Sharp connectivity thresholds for mixed rigidity packings and improved bounds for highly connected orientations},
  pdfauthor={Hanzhi Bai, Jorgen Bang-Jensen, and Jin Yan}}

\theoremstyle{plain}
\newtheorem{theorem}{Theorem}[section]
\newaliascnt{lemma}{theorem}
\newtheorem{lemma}[lemma]{Lemma}
\aliascntresetthe{lemma}
\newaliascnt{claim}{theorem}
\newtheorem{claim}[claim]{Claim}
\aliascntresetthe{claim}
\newaliascnt{fact}{theorem}

\aliascntresetthe{fact}
\newaliascnt{corollary}{theorem}
\newtheorem{corollary}[corollary]{Corollary}
\aliascntresetthe{corollary}
\newaliascnt{proposition}{theorem}
\newtheorem{proposition}[proposition]{Proposition}
\aliascntresetthe{proposition}
\newaliascnt{conjecture}{theorem}
\newtheorem{conjecture}[conjecture]{Conjecture}
\aliascntresetthe{conjecture}
\newaliascnt{observation}{theorem}

\aliascntresetthe{observation}

\theoremstyle{definition}
\newaliascnt{definition}{theorem}

\aliascntresetthe{definition}
\newaliascnt{problem}{theorem}
\newtheorem{problem}[problem]{Problem}
\aliascntresetthe{problem}
\newaliascnt{example}{theorem}

\aliascntresetthe{example}
\newaliascnt{construction}{theorem}

\aliascntresetthe{construction}

\theoremstyle{definition}
\newaliascnt{remark}{theorem}

\aliascntresetthe{remark}
\theoremstyle{remark}
\newaliascnt{question}{theorem}

\aliascntresetthe{question}
\usepackage{thmtools}
\newcommand{\Rmat}{\mathcal{R}}
\newcommand{\dotunion}{\mathbin{\dot\cup}}
\newcommand{\jbj}[1]{\textcolor{black}{#1}}

\newcommand{\bhz}[2]{\textcolor{black}{#1}}

\begin{document}

\date{}
\begin{spacing}{1.03}

\title{Sharp connectivity thresholds for mixed rigidity packings and \jbj{improved} bounds for highly connected orientations \jbj{of graphs}}

\author{
Hanzhi Bai$^{1}$\and
J{\o}rgen Bang-Jensen$^{2}$\and
Jin Yan$^{1}$
}

\footnotetext[1]{{School of Mathematics, Shandong University, Jinan 250100,
P.~R.~China. E-mails: {\tt bhz@mail.sdu.edu.cn} (H. Bai),
{\tt yanj@sdu.edu.cn} (J. Yan). Supported by the National Natural Science
Foundation of China (Grant No.~12571373) and the Natural Science Foundation
of Shandong Province (Grant No.~ZR2025MS05).}}
\footnotetext[2]{Department of Mathematics and Computer Science, University of
Southern Denmark, Odense DK-5230, Denmark \jbj{and School of Mathematics,
Shandong University, Jinan 250100, P.~R.~China}. E-mail:
{\tt jbj@imada.sdu.dk} (J. Bang-Jensen).
}

\maketitle

\begin{abstract}
Garamv\"olgyi, Jord\'an, Kir\'aly and Vill\'anyi
[{{\bf Forum Math. Pi} \textbf{13} (2025), Paper No.~e11}]
posed two sharp connectivity conjectures for packing rigid spanning
subgraphs: one for the equal-dimensional case and the other for the packing
of a $d$-rigid spanning subgraph with a spanning tree. We prove a unified
theorem: for arbitrary positive integers $d_1,\ldots,d_s$, every
$\sum_{i=1}^{s}d_i(d_i+1)$-connected graph contains pairwise edge-disjoint
spanning subgraphs $H_1,\ldots,H_s$ such that $H_i$ is $d_i$-rigid for every
$i$. The connectivity bound is sharp whenever
$\sum_{i=1}^{s}d_i(d_i+1)\ge4$.

As special cases, the theorem settles both conjectures, confirms the
conjecture of Garamv\"olgyi, Jord\'an and Kir\'aly
[{\bf J. Combin. Theory Ser. B} \textbf{166} (2024), 1--29] that every
$tk(k+1)$-connected graph contains $t$ pairwise edge-disjoint
$k$-connected spanning subgraphs, and gives the sharp threshold
$d(d+1)+2r$ for packing one $d$-rigid spanning subgraph together with $r$
pairwise edge-disjoint spanning trees.

We also obtain two upper bounds related to Thomassen's conjecture on highly
connected orientations \jbj{of graphs}. If $f(q)$ is the least integer such
that every $f(q)$-connected graph has a $q$-connected orientation, then
$f(q)\le(25q^2+41q-16)/2$ for every $q\ge3$ and
$f(q)\le8q^2+212q+1404=(8+o(1))q^2$ for all sufficiently large $q$;
\jbj{these two results} reduce the leading coefficient in the previous
quadratic bound from $320$ to
\jbj{$25/2$ for every $q\ge3$ and $8$ for all sufficiently large $q$. Compared to the bound for $f(q)$ obtained by Garamv\"olgyi et al. we obtain the better bounds, not only through our tight rigidity result but also by exploiting the leftover edges when we remove two edge-disjoint spanning (sufficiently) rigid graphs.}
\end{abstract}

\noindent\textbf{Keywords:} generic rigidity, rigidity matroid, matroid union,
vertex-connectivity, spanning-tree packing,
\jbj{highly connected orientations}.

\noindent\textbf{2020 Mathematics Subject Classification:} Primary 52C25;
Secondary 05B35, 05C40.

\section{Introduction}

Generic $d$-dimensional rigidity may be formulated through the generic
rigidity matroid: a graph is {\bf $d$-rigid} if its edge set spans this
matroid; see \cite{AsimowRoth,Graver,GraverServatius,SchulzeWhiteley}. For
$d=1$, rigidity is ordinary connectivity. Laman \cite{Laman} characterized
minimal rigidity in dimension two, while no analogous combinatorial
characterization is known for $d\ge3$ \cite{SchulzeWhiteley,GJKV}.

Lov\'asz and Yemini \cite{LovaszYemini} proved the sharp
$6$-connectivity threshold for $d=2$. Vill\'anyi \cite{Villanyi} later proved
the general sharp threshold $d(d+1)$ for $d\ge2$, with the matching lower
construction given by Garamv\"olgyi, Jord\'an, Kir\'aly and Vill\'anyi
\cite{GJKV}.

Packing several rigid spanning subgraphs is a rigidity counterpart of the
classical spanning-tree packing problem of Nash-Williams and Tutte
\cite{NashWilliams,Tutte}. Jord\'an \cite{Jordan} proved the sharp
$6t$-connectivity threshold for packing $t$ edge-disjoint $2$-rigid spanning
subgraphs.

A related result using the $C^1_2$-cofactor matroid, a rigidity-type matroid,
shows that every
$12t$-connected graph contains $t$ edge-disjoint $C^1_2$-rigid, and hence
$3$-connected, spanning subgraphs \cite{GJKCofactor}. In arbitrary dimension,
Garamv\"olgyi, Jord\'an, Kir\'aly and Vill\'anyi \cite{GJKV} proved that
$10td(d+1)$-connectivity is sufficient. Their construction shows that
$td(d+1)-1$ is insufficient and led to the following conjecture.

\begin{conjecture}
\label{conj:sharp}
For all integers $d\ge2$ and $t\ge1$, every $td(d+1)$-connected graph
contains $t$ pairwise edge-disjoint $d$-rigid spanning subgraphs.
\end{conjecture}

For $d=2$, Cheriyan, Durand de Gevigney and Szigeti
\cite{CheriyanDurandSzigeti} proved that $(6+2r)$-connectivity suffices to
pack, pairwise edge-disjointly, one $2$-rigid spanning subgraph and $r$
spanning trees. For general $d$,
Garamv\"olgyi, Jord\'an, Kir\'aly and Vill\'anyi
\cite[Theorem~1.8]{GJKV} proved that $(d^2+3d+5)$-connectivity suffices to
pack, edge-disjointly, one $d$-rigid spanning subgraph and one spanning tree,
and proposed the following sharp threshold.

\begin{conjecture}
\label{conj:mixed}
For every integer $d\ge2$, every $(d(d+1)+2)$-connected graph contains
edge-disjoint spanning subgraphs $H$ and $T$ such that $H$ is $d$-rigid and
$T$ is a spanning tree.
\end{conjecture}

Except for a single one-dimensional factor, the lower-bound construction in
\cite[Lemma~6.1]{GJKV} already applies to arbitrary prescribed dimensions and
suggests the following unified packing theorem, which settles
Conjectures~\ref{conj:sharp} and~\ref{conj:mixed}.

\begin{restatable}{theorem}{mainthm}\label{thm:main}
Let $d_1,\ldots,d_s$ be positive integers and put
\[
K=\sum_{i=1}^{s}d_i(d_i+1).
\]
Every $K$-connected graph \jbj{$G$} contains pairwise edge-disjoint spanning subgraphs
$H_1,\ldots,H_s$ such that $H_i$ is $d_i$-rigid for every
$i\in\{1,\ldots,s\}$. If $K\ge4$, this connectivity bound is best possible.
\end{restatable}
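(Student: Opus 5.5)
The plan is to use matroid union together with an extension of Villányi's method. Write $\Rmat_{d}$ for the generic $d$-dimensional rigidity matroid on $E(G)$, with rank function $r_d$. By the Edmonds matroid union theorem, $G$ contains edge-disjoint spanning subgraphs $H_1,\dots,H_s$ with $H_i$ $d_i$-rigid if and only if the union $\Rmat_{d_1}\vee\dots\vee\Rmat_{d_s}$ has full rank $\sum_i r_{d_i}(K_n)$. Equivalently, for every $F\subseteq E(G)$,
\[
|E(G)\setminus F|+\sum_{i=1}^{s} r_{d_i}(F)\ \ge\ \sum_{i=1}^{s}\Bigl(dn-\tbinom{d_i+1}{2}\Bigr)\Big|_{d=d_i}.
\]
So the whole task is to show this inequality for every $F$ whenever $G$ is $K$-connected.

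First I reduce to a structured $F$. We may take $F$ closed in every $\Rmat_{d_i}$ simultaneously, and by submodularity/uncrossing choose a minimal violating $F$. Villányi's proof of the $d(d+1)$ threshold gives a key tool, the vertex-partition bound for $d$-rigidity. For a graph whose components in the relevant closure are $G[X_1],\dots,G[X_m]$, one has $r_d(F)\le\sum_j (d|X_j|-\binom{d+1}{2})$, with the singleton and small parts counted as $d|X_j|-\binom{|X_j|}{2}$. Villányi in fact proves a sharper "covering" estimate: for $F$ the union of the rigid components, $r_d(F)\le\sum_j\bigl(d|X_j|-\binom{d+1}{2}\bigr)$ counted over a cover by rigid clusters with low overlap. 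The idea is to pick a single common family of vertex sets that is refined by all the closures. Then $E\setminus F$ contains all edges between different clusters of the coarsest partition.

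I then count the deficit. Each vertex $v$ contributes a "charge" $\sum_i d_i$ to the right side beyond what its clusters account for. $K$-connectivity forces at least $K/2$ edges leaving any cluster of size less than $n-K$, or rather at least $K$ edges out of each small cluster in total counted by boundary vertices. Distributing each crossing edge half to each endpoint-cluster and comparing with the loss $\sum_i\binom{d_i+1}{2}=K/2$ per cluster should give the inequality. The exact weighting (Villányi's double counting with $d(d+1)$ matching $2\binom{d+1}{2}$) generalises linearly in $i$, because $K=\sum_i 2\binom{d_i+1}{2}$ is additive.

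The main obstacle is that the closures of $F$ in different $\Rmat_{d_i}$ give different cluster structures, so no single partition is available. I would first try to handle this with Villányi's "$d$-dimensional cover" argument applied per $i$, with a separate charging per dimension. Then I would sum, using for each $i$ only the edges of $E\setminus F$ that separate $d_i$-clusters. This risks double-counting edges of $E\setminus F$ across different $i$, and I expect that fixing it needs the paper's new lemma that large clusters are shared, i.e.\ rigid in every dimension.

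Sharpness for $K\ge4$ follows from the construction in \cite[Lemma~6.1]{GJKV}, adapted to mixed dimensions. Take many disjoint copies of $K_{K-1}$-type blocks, joined so that the graph is $(K-1)$-connected. A count then shows that the edges fall short of $\sum_i (d_in-\binom{d_i+1}{2})$. The hypothesis $K\ge4$ excludes the trivial cases $s=1,d_1=1$ and $d_i=1$ twice.
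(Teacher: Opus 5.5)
Your reduction to the matroid-union inequality is exactly the paper's. The gap is that the inequality carrying the whole proof, $|E\setminus F|+\sum_i r_{d_i}(F)\ge\sum_i\bigl(d_in-\binom{d_i+1}{2}\bigr)$ for $K$-connected $G$, is never established. The cluster-charging route you sketch cannot establish it once some $d_i\ge3$.

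\textbf{The cluster bound points the wrong way.} The estimate you attribute to Vill\'anyi, $r_d(F)\le\sum_j\bigl(d|X_j|-\binom{d+1}{2}\bigr)$ over rigid clusters, is just subadditivity, so it is an \emph{upper} bound on the rank. Matroid union needs a \emph{lower} bound on $r_{d_i}(F)$, that is, an upper bound on the deficiency $\Delta_i=d_in-\binom{d_i+1}{2}-r_{d_i}(F)$ in terms of $|E\setminus F|$. In the plane the Lov\'asz--Yemini formula says some cover attains the bound, which is what makes cluster counting work there (as in Lov\'asz--Yemini and Jord\'an). For $d\ge3$ no such min--max formula is known, and the cluster sum can exceed the rank. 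The double banana in $\mathbb{R}^3$ has two rigid components on five vertices each, giving cluster sum $9+9=18$, but its rank is $17$.

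\textbf{The connectivity count is also off.} Write $H=(V,F)$ and take a fragment $X$ of $H$. Menger only yields $K-|N_H(X)|$ edges of $E\setminus F$ from $X$ to $X^{*}=V\setminus(X\cup N_H(X))$, not $K$ edges. Relating these fragment deficiencies to rank deficiency is precisely the hard step.

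\textbf{The cross-dimensional step is left open.} You defer the combination over dimensions to a hoped-for ``shared large clusters'' lemma. Nothing of that kind is needed.

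\textbf{The missing idea.} The paper bounds rank deficiency by a purely connectivity-theoretic parameter and uses Vill\'anyi's theorem only as a black box. Let $t'_k(H)$ be the maximum of $\sum_X(k-|N_H(X)|)$ over half-disjoint families of $k$-deficient fragments of $H$. The argument has three steps.
\begin{enumerate}
\item For $d\ge2$ and $k_0=d(d+1)$, one has $\Delta_d(H)\le\frac{1}{2}t'_{k_0}(H)$. Take $L$ disjoint copies $J_L$ of $H$. Every $k_0$-connected augmentation of $J_L$ is $d$-rigid, so at least $L\bigl(\binom{d+1}{2}+\Delta_d(H)\bigr)-\binom{d+1}{2}$ edges must be added. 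The Jackson--Jord\'an augmentation min--max theorem shows that $\frac{L}{2}\bigl(k_0+t'_{k_0}(H)\bigr)$ plus a constant independent of $L$ edges suffice. Letting $L\to\infty$ gives the bound; the case $d=1$ is direct.
\item For $b\ge a$, scaling gives $t'_b(H)\ge\frac{b}{a}t'_a(H)$.
\item If $G$ is $K$-connected, then $t'_K(H)\le2|E\setminus F|$, by the Menger count above together with half-disjointness.
\end{enumerate}
With $k_i=d_i(d_i+1)$ these give $\Delta_i\le\frac{k_i}{K}|E\setminus F|$, and summing uses $\sum_i k_i=K$. Your instinct that the weighting is linear in $k_i$ is right. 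The paper realizes it through the single dimension-free quantity $t'_K(H)$, so no cluster structure in any dimension is ever needed.

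\textbf{A small correction on sharpness.} $K\ge4$ does not exclude $d_1=d_2=1$: that case has $K=4$, and $3$-connected cubic graphs on more than four vertices have no two edge-disjoint spanning trees. The hypothesis only excludes $s=1$, $d_1=1$.
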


The equal-dimensional specialization of Theorem~\ref{thm:main} has
consequences for both rigid and connected spanning subgraphs. For every
positive integer $k$, let $f^{*}(k)$ be the least integer such that, for every
positive integer $t$, every $t f^{*}(k)$-connected graph contains $t$
pairwise edge-disjoint $k$-connected spanning subgraphs. Garamv\"olgyi,
Jord\'an and Kir\'aly \cite[Section~5.3]{GJKCofactor} conjectured that
$f^{*}(k)\le k(k+1)$, while Garamv\"olgyi, Jord\'an, Kir\'aly and Vill\'anyi
\cite{GJKV} proved $f^{*}(k)\le10k(k+1)$.

\smallskip

\jbj{Theorem \ref{thm:main} has several consequences.}
\begin{corollary}\label{cor:equal-packings}
For every positive integer $t$, the following statements hold.
\begin{enumerate}[label=\textup{(\alph*)}]
\item For every integer $d\ge2$, every $td(d+1)$-connected graph contains
$t$ pairwise edge-disjoint $d$-rigid spanning subgraphs. The connectivity
bound is best possible.
\item For every positive integer $k$, every $tk(k+1)$-connected graph
contains $t$ pairwise edge-disjoint $k$-connected spanning subgraphs.
Consequently, $f^{*}(k)\le k(k+1)$.
\end{enumerate}
\end{corollary}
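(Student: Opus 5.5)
Both parts are obtained by applying Theorem~\ref{thm:main} with all dimensions equal. For (a), take $s=t$ and $d_1=\cdots=d_t=d$. Then $K=td(d+1)$, and Theorem~\ref{thm:main} gives $t$ pairwise edge-disjoint $d$-rigid spanning subgraphs in every $td(d+1)$-connected graph. For sharpness, note that $d\ge2$ gives $K\ge 6t\ge 6\ge4$. Hence the optimality clause of Theorem~\ref{thm:main} applies. Equivalently, the construction of \cite{GJKV} shows that $td(d+1)-1$ does not suffice. Either way, the bound is best possible.

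For (b), the case $k=1$ is handled directly. Take $d_i=1$ for all $i$, so $K=2t$. A $1$-rigid spanning subgraph is a connected spanning subgraph, so Theorem~\ref{thm:main} gives $t$ edge-disjoint connected spanning subgraphs, i.e.\ $t$ spanning trees. One could also cite Nash-Williams--Tutte, since $2t$-connectivity implies $2t$-edge-connectivity.

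For $k\ge2$, apply (a) with $d=k$. This yields $t$ pairwise edge-disjoint $k$-rigid spanning subgraphs $H_1,\ldots,H_t$. It then remains to use the standard fact that a $d$-rigid graph on at least $d+1$ vertices is $d$-connected. The argument is the usual one. Suppose removing a set $S$ with $|S|\le d-1$ disconnects $H$ into parts $A$ and $B$. Then $H$ is contained in the union of two rigid-closed pieces, on $A\cup S$ and on $B\cup S$. These two pieces share at most $d-1$ vertices. Gluing along fewer than $d$ vertices leaves a nontrivial flex: rotate one side about the affine span of $S$, which has dimension at most $d-2$. This contradicts rigidity. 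The degenerate case needs a quick check. A $k$-connected graph on $n$ vertices needs $n\ge k+1$, and since $G$ is $tk(k+1)$-connected it has more than $tk(k+1)\ge k+1$ vertices, so the fact applies. Therefore each $H_i$ is $k$-connected.

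The last clause is then immediate. For every $t$, every $t\cdot k(k+1)$-connected graph contains $t$ edge-disjoint $k$-connected spanning subgraphs, so $f^*(k)\le k(k+1)$ by the definition of $f^*$.

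The only step with real content beyond substitution is the implication that $d$-rigid implies $d$-connected. It is classical (e.g.\ in \cite{GJKV}), and I would cite it rather than reprove it.
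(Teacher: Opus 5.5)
Your proposal is correct and matches the paper's proof: (a) is Theorem~\ref{thm:main} with $d_1=\cdots=d_t=d$, with sharpness from Lemma~\ref{lem:lower-bound-construction} (the same fact behind the sharpness clause you invoke, valid since $K\ge 6$), and (b) for $k\ge2$ combines (a) with Lemma~\ref{lem:standard}, using $n\ge tk(k+1)+1\ge k+1$. The only difference is at $k=1$, where the paper cites Nash-Williams--Tutte directly and your main route specializes Theorem~\ref{thm:main} to $d_i=1$; both are valid.
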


Part~\textup{(a)} of Corollary~\ref{cor:equal-packings} confirms
Conjecture~\ref{conj:sharp}. Part~\textup{(b)} confirms the bound
$f^{*}(k)\le k(k+1)$ proposed by Garamv\"olgyi, Jord\'an and Kir\'aly
\cite[Section~5.3]{GJKCofactor} and improves the previous bound
$10k(k+1)$ to $k(k+1)$ for $k\ge4$.

The specialization \jbj{$s=r+1$ and $d_1=d,d_2=1,\ldots,d_{s}=1$} yields the following mixed packing \jbj{result}, which, \jbj{through the fact that every $d$-rigid graph on at least $d+1$ vertices is $d$-connected,} this also gives bounds for Kriesell's removable-spanning-tree problem
\cite[Problem~444]{MoharNowakowskiWest}.

\begin{corollary}\label{cor:tree-removal}
For every positive integer $r$, the following statements hold.
\begin{enumerate}[label=\textup{(\alph*)}]
\item For every integer $d\ge2$, every $(d(d+1)+2r)$-connected graph
contains pairwise edge-disjoint spanning subgraphs
$H,T_1,\ldots,T_r$, where $H$ is $d$-rigid and
$T_1,\ldots,T_r$ are spanning trees. The connectivity bound is best
possible.
\item For every positive integer $k$, every $(k(k+1)+2r)$-connected graph
$G$ contains $r$ pairwise edge-disjoint spanning trees
$T_1,\ldots,T_r$ such that
\(
G-\bigcup_{i=1}^{r}E(T_i)
\)
is $k$-connected.
\end{enumerate}
\end{corollary}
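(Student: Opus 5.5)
Both parts follow from Theorem~\ref{thm:main} with $s=r+1$, $d_1=d$ and $d_2=\cdots=d_{r+1}=1$. For these choices
\[
K=d(d+1)+\sum_{i=2}^{r+1}1\cdot 2=d(d+1)+2r .
\]
Hence every $(d(d+1)+2r)$-connected graph $G$ contains pairwise edge-disjoint spanning subgraphs $H_1,\ldots,H_{r+1}$, where $H_1$ is $d$-rigid and $H_2,\ldots,H_{r+1}$ are $1$-rigid. A graph is $1$-rigid exactly when it is connected, so each $H_{i+1}$ contains a spanning tree $T_i$. Put $H=H_1$. Since the $T_i$ are edge subsets of the pairwise edge-disjoint $H_{i+1}$, the graphs $H,T_1,\ldots,T_r$ are pairwise edge-disjoint. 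This proves the existence claim in~(a).

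For sharpness in~(a), note that $d\ge2$ and $r\ge1$ give $K=d(d+1)+2r\ge 8\ge4$. Theorem~\ref{thm:main} therefore says the bound $K$ is best possible for this dimension vector. Concretely, there is a $(K-1)$-connected graph with no pairwise edge-disjoint spanning subgraphs $H_1,\ldots,H_{r+1}$ with $H_1$ $d$-rigid and the others $1$-rigid. Such a packing is exactly a $d$-rigid $H$ together with $r$ edge-disjoint spanning trees, so the same graph witnesses sharpness in~(a). The only subtlety is that the sharpness statement of the theorem must hold for every dimension vector with $K\ge4$, which is how it is stated.

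For~(b), first take $k\ge2$ and apply~(a) with $d=k$. This gives edge-disjoint $H,T_1,\ldots,T_r$, and $H$ is a spanning subgraph of $G-\bigcup_i E(T_i)$. The graph $G$ is $(k(k+1)+2r)$-connected, so it has at least $k(k+1)+2r+1\ge k+1$ vertices. By the fact quoted before the corollary, a $k$-rigid graph on at least $k+1$ vertices is $k$-connected, so $H$ is $k$-connected. Adding edges preserves $k$-connectivity, hence $G-\bigcup_i E(T_i)$ is $k$-connected.

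For $k=1$, (a) does not apply, so use Theorem~\ref{thm:main} directly with $r+1$ copies of $d=1$, which needs connectivity $2(r+1)=k(k+1)+2r$. This yields $r+1$ edge-disjoint connected spanning subgraphs. Take spanning trees $T_1,\ldots,T_r$ in the last $r$ of them. The remainder $G-\bigcup_i E(T_i)$ contains the first connected spanning subgraph, so it is connected, that is, $1$-connected.

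I expect no genuine obstacle. The only points needing care are handling $k=1$ separately and checking that the vertex count is large enough for rigidity to imply connectivity.
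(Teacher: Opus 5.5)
Your proposal is correct and follows the paper's proof essentially step for step. For (a) you apply Theorem~\ref{thm:main} with the dimension sequence $d,1,\ldots,1$ and take spanning trees in the $1$-rigid parts. You get sharpness from the lower-bound construction; you cite it through the sharpness clause of Theorem~\ref{thm:main}, while the paper cites Lemma~\ref{lem:lower-bound-construction} directly. For $k\ge2$ in (b) you use Lemma~\ref{lem:standard}, as the paper does. The only difference is cosmetic: for $k=1$ the paper invokes Theorem~\ref{thm:tree-packing} (Nash-Williams--Tutte) to get $r+1$ edge-disjoint spanning trees, whereas you obtain $r+1$ edge-disjoint connected spanning subgraphs from Theorem~\ref{thm:main} with all $d_i=1$, which is equally valid.
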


Part~\textup{(a)} of Corollary~\ref{cor:tree-removal}, with $r=1$,
settles Conjecture~\ref{conj:mixed}, and the full statement is sharp for
every $r$. Part~\textup{(b)} improves the previous removable-tree bound for
$r=1$ and extends it to the simultaneous deletion of $r$ trees.

The proof \jbj{exploits} the following mixed-dimensional rank inequality. For
$D_i=\binom{d_i+1}{2}$ and an $n$-vertex graph $H$ \jbj{on $n\geq d_i+1$ vertices}, the quantity
$d_i n-D_i-r_{d_i}(H)$ is the corresponding rank deficiency \jbj{of $H$.}

\begin{restatable}{theorem}{rankdefectthm}\label{thm:rank-defect}
Let $d_1,\ldots,d_s$ be positive integers, put
$K=\sum_{i=1}^{s}d_i(d_i+1)$ and $D_i=\binom{d_i+1}{2}$, and let
$H=(V,E_H)$ and $F=(V,E_F)$ be edge-disjoint graphs on the same $n$-vertex
set. If $\jbj{G=}H\cup F$ is $K$-connected, then
\begin{equation}\label{eq:rank-defect}
|E_F|+\sum_{i=1}^{s}r_{d_i}(H)
\ge\sum_{i=1}^{s}(d_i n-D_i).
\end{equation}
\end{restatable}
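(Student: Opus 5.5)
We plan to prove Theorem~\ref{thm:rank-defect} by the Villányi-type method: induct on $|E_H|$ and, within a fixed edge count, choose a counterexample that is extremal in a suitable order. The engine is a structural description of a $d$-rigid-deficient graph. For $H$ on $V$ and each $i$, take a \emph{$d_i$-rigid component decomposition} of $H$: the maximal $d_i$-rigid vertex sets $X_1^{(i)},\ldots,X_{m_i}^{(i)}$ (with single edges and vertices counting as trivial components). We use the standard rank upper bound
\[
r_{d_i}(H)\le \sum_j \bigl(d_i|X_j^{(i)}|-D_i\bigr)
\]
over nontrivial components, with the corrected counting when $|X_j|\le d_i$. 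Rigid components of a fixed dimension pairwise share at most $d_i-1$ vertices. Our aim is to reduce \eqref{eq:rank-defect} to a counting statement about these covers: every vertex set $U$ that meets at least two components in a "thin" way must be separated from the rest by a vertex cut of $G$. $K$-connectivity then forces many edges of $F$, or many overlaps, across each such cut.

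Concretely, we proceed by induction on $n$ together with a minimal counterexample $(H,F)$.

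\textbf{Step 1.} Assume $H$ is edge-maximal in the sense that moving any edge $e\in E_F$ into $H$ raises no rank $r_{d_i}$. If moving $e$ raised $\sum_i r_{d_i}$ by at least one, the left side of \eqref{eq:rank-defect} would not increase while $|E_F|$ decreased by one. So we may assume every $F$-edge lies inside a $d_i$-rigid component of $H$ for every $i$, or else its transfer gains exactly compensate.

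\textbf{Step 2.} Apply a Villányi-style lemma. If $H$ is not $d_i$-rigid, then among the $d_i$-rigid components we can find a component $X$ together with a vertex $v\notin X$ whose neighbourhood in $X$ is small. Choose a vertex $v$ of minimum total "degree defect"
\[
\sum_i \bigl(d_i-\#\{\text{independent }H\text{-edges at }v\text{ in dimension } d_i\}\bigr).
\]
By $K$-connectivity, $\deg_G(v)\ge K=\sum_i 2D_i$.

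\textbf{Step 3.} Delete $v$ (a $0$-extension in reverse) and use the fact that removing a vertex of degree at most $d_i$ in $H$ drops $r_{d_i}$ by exactly its degree. The remaining graph $G-v$ is only $(K-1)$-connected, so the plain induction breaks. To handle this we would instead contract a suitable rigid component to a single vertex, or use the averaging argument of Villányi: sum the inequality over a family of vertex-deleted or contracted graphs that are still $K$-connected, and track how $\sum_i(d_in-D_i)$ changes.

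The main obstacle is exactly this step: keeping $K$-connectivity (rather than $K-1$) through the reduction while coupling all dimensions simultaneously. The rigid components differ for different $d_i$, so a single contraction does not work for every $i$. As a first attempt we would use a mixed deficiency function
\[
\sum_i \bigl(d_i|X|-D_i-r_{d_i}(H[X])\bigr)
\]
and prove it submodular-like on intersecting sets with $|X\cap Y|\ge \max_i d_i$. We would then take an inclusion-minimal set $X$ violating the analogue of \eqref{eq:rank-defect} for $F$-edges inside or leaving $X$. The number of $F$-edges leaving $X$ is bounded below via the vertex cut $N(V\setminus X)$ of size at least $K$, and each cut vertex accounts for $\sum_i 2D_i/2$ in the budget. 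Verifying this cut-counting inequality dimension by dimension is where we expect the real work to lie.
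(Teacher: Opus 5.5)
\paragraph{Verdict: genuine gap.} Your proposal stops where the proof has to happen. Nowhere do you obtain an upper bound on $\Delta_i=d_in-D_i-r_{d_i}(H)$ in terms of $|E_F|$, and you say yourself that Step~3 and the final cut-counting are unresolved. The concrete parts also point the wrong way:
\begin{enumerate}[label=(\arabic*)]
\item \emph{Step~1 is reversed.} Moving $e\in E_F$ into $H$ changes the left side of \eqref{eq:rank-defect} by $\mathrm{gain}(e)-1$, where $\mathrm{gain}(e)\in\{0,\ldots,s\}$ is the increase of $\sum_i r_{d_i}$. Since $G$ is unchanged, the instance $(H+e,F-e)$ implies $(H,F)$ exactly when $\mathrm{gain}(e)\le1$. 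So a counterexample minimizing $|E_F|$ has $\mathrm{gain}(e)\ge2$ for \emph{every} $e\in E_F$. This is the opposite of your assumption that no $F$-edge raises any rank. Your assumed situation is the trivial one, since then $r_{d_i}(H)=r_{d_i}(G)=d_in-D_i$ by Theorem~\ref{thm:villanyi-rigidity}. The corrected reduction settles $s=1$ at once and shows that the whole difficulty is coupling the dimensions, which is exactly what you leave open.
\item \emph{The component bound has the wrong direction.} The estimate $r_{d_i}(H)\le\sum_j(d_i|X_j|-D_i)$ bounds rank from above. Inequality \eqref{eq:rank-defect} needs a lower bound, and for $d_i\ge3$ no combinatorial rank formula is available to reverse it.
\item \emph{The hoped-for submodularity fails.} Take $s=1$, $d_1=2$, and let $H$ be two copies of $K_4$ minus an edge, on $X$ and $Y$, glued at the two ends $u,v$ of the missing edge. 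Then $|X\cap Y|=2$, and your deficiency is $0$ on $X$, $Y$ and $X\cup Y$ but $1$ on $X\cap Y$.
\end{enumerate}
So Steps~2--3 amount to re-proving Vill\'anyi's theorem in a harder mixed form, and no argument for that is given.

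\paragraph{The missing idea.} You need a dimension-free separator quantity through which the dimensions decouple linearly, so that Vill\'anyi's theorem is used only as a black box. Let $t'_k(H)$ be the maximum of $\sum_X(k-|N_H(X)|)$ over half-disjoint families of fragments $X$ of $H$ with $|N_H(X)|<k$. The paper proves three facts:
\begin{enumerate}[label=(\roman*)]
\item $t'_K(H)\le2|E_F|$ (Lemma~\ref{lem:fragment-edge-bound}). By Menger in $G$, each such $X$ forces $K-|N_H(X)|$ edges of $F$ from $X$ to $X^{*}$, and half-disjointness counts each $F$-edge at most twice. This is the rigorous form of your ``$K$-connectivity forces many $F$-edges across each cut''.
\item The trivial scaling $t'_K(H)\ge\frac{K}{k_i}\,t'_{k_i}(H)$, where $k_i=d_i(d_i+1)$.
\item The key inequality $t'_{k_i}(H)\ge2\Delta_i$ (Lemma~\ref{lem:rank-bound}; for $d_i=1$ it is immediate from the components of $H$). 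Let $J_L$ be the disjoint union of $L$ copies of $H$. Vill\'anyi's theorem shows that any edge set making $J_L$ $k_i$-connected has size at least $L(D_i+\Delta_i)-D_i$. The Jackson--Jord\'an min--max formula (Theorem~\ref{thm:JJ-connectivity}) shows that $\frac L2\bigl(k_i+t'_{k_i}(H)\bigr)+B$ edges suffice, with $B$ independent of $L$. Since $k_i=2D_i$, letting $L\to\infty$ gives (iii).
\end{enumerate}
Chaining (i)--(iii) gives $\Delta_i\le\frac{k_i}{K}|E_F|$. Summing with $\sum_ik_i=K$ yields \eqref{eq:rank-defect}, with no induction at all. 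Without some replacement for (iii), your induction has no mechanism for producing the needed lower bound on rank.
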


By \jbj{Edmonds'} matroid union theorem, Theorem~\ref{thm:rank-defect}, applied to
every edge partition \jbj{of $E(G)$}, yields the packing in Theorem~\ref{thm:main}. The full
argument is given in Subsection~\ref{subsec:proof-rank-packing}.

Our further results concern highly connected orientations. An orientation of
a graph is {\bf $q$-connected} if the resulting digraph has at least $q+1$
vertices and remains strongly connected after the deletion of any set of
fewer than $q$ vertices. Thomassen \cite[Conjecture~10]{Thomassen1989} posed
the following conjecture.

\begin{conjecture}
\label{conj:thomassen}
For every positive integer $q$, there exists a least integer $f(q)$ such that
every $f(q)$-connected graph has a $q$-connected orientation.
\end{conjecture}

\jbj{It follows by the well-known strong orientation theorem by} Robbins \cite{Robbins} that $f(1)=2$. Thomassen and Jackson conjectured
that $f(q)=2q$ for every positive integer $q$
\cite[Conjecture~11]{Thomassen1989}. \jbj{Jord\'an \cite{Jordan} confirmed that $f(2)$ is finite by proving that $f(2)\leq 18$. Later } Thomassen
\cite{Thomassen2015} proved that $f(2)=4$. \jbj{Finally, recently } Garamv\"olgyi, Jord\'an, Kir\'aly and
Vill\'anyi \cite{GJKV} proved that $f(q)$ exists \jbj{for all positive integers $q$ by showing that} 
$f(q)\le20(4q-4)(4q-3)=320q^2-560q+240$ for $q\ge2$. We obtain the following
two improvements.

\begin{restatable}{theorem}{explicitorientationthm}\label{thm:orientation-explicit}
For every integer $q\ge3$,
\[
f(q)\le\frac{25q^2+41q-16}{2}.
\]
\end{restatable}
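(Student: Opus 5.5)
The plan is to follow the orientation strategy of Garamv\"olgyi, Jord\'an, Kir\'aly and Vill\'anyi \cite{GJKV}, with two changes. Their packing step is replaced by the sharp Theorem~\ref{thm:main}. The edges left over after the packing are used as well, instead of being discarded.

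Recall the shape of their argument. If $G$ contains two edge-disjoint spanning subgraphs $H_1,H_2$ that are $d$-rigid for a suitable $d=d(q)$, then $G$ has a $q$-connected orientation. The proof goes through the stability of rigidity under vertex deletion: deleting a set $X$ of fewer than $q$ vertices from a $d$-rigid graph leaves a $(d-|X|)$-rigid, and in particular connected, graph. This allows an orientation in which, for every $X$ with $|X|\le q-1$, the edges of $H_1-X$ supply out-reachability and those of $H_2-X$ supply in-reachability from a common root. GJKV could only guarantee two such subgraphs under $20d(d+1)$-connectivity. Theorem~\ref{thm:main} with $s=2$ and $d_1=d_2=d$ needs only $2d(d+1)$.

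The leading term $25q^2/2$ then corresponds to $2d(d+1)$ with $d\approx 5q/2$. So the first step is to re-examine the orientation lemma and show it works with $d=\lceil 5q/2\rceil-c$ for a small constant $c$, rather than $4q-4$. For this I would extend the packing with Theorem~\ref{thm:main} in the mixed form, taking $d_1=d_2=d$ and adding a linear number of $1$-dimensional factors (spanning trees, each costing only $2$ in $K$). These extra trees, or more generally the leftover edge set $E(G)\setminus(E(H_1)\cup E(H_2))$, compensate for the weaker rigidity. Concretely, I would require the leftover graph to contain enough edge-disjoint spanning trees so that, after deleting any $X$ with $|X|\le q-1$, the rigidity deficit in $H_1-X$ and $H_2-X$ is repaired by leftover edges oriented suitably. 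These could be oriented via Nash-Williams' well-balanced orientation theorem, so that every cut keeps roughly half of its leftover edges in each direction.

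The steps, in order, are as follows.
\begin{enumerate}[label=\textup{(\roman*)}]
\item State a refined orientation lemma. If $G$ contains edge-disjoint spanning $H_1,H_2,T_1,\dots,T_r$ with $H_1,H_2$ $d$-rigid, $T_j$ spanning trees, and $d$ and $r$ satisfy an explicit linear relation with $q$, then $G$ has a $q$-connected orientation.
\item Verify the cut conditions of this lemma. Use Frank's characterisation of $q$-connected orientability (via $X$-deletions and cut counts) together with the rank bounds that $d$-rigidity gives on $H_i-X$.
\item Optimise $d$ and $r$ under the constraint $2d(d+1)+2r\le (25q^2+41q-16)/2$, handling the parity of $q$.
\item Invoke Theorem~\ref{thm:main}.
\end{enumerate}
The hypothesis $q\ge3$ should enter in step (iii), where the chosen $d$ must be at least $2$ and the linear terms must absorb the rounding.

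The main obstacle is step (i)/(ii). It requires quantifying exactly how much of each cut $\delta(Y)$ in $G-X$ is guaranteed by $d$-rigid spanning subgraphs after deleting $X$, and how many leftover edges must be added to reach the threshold in both directions. I would first try a counting argument on the rigidity rank: $r_d(H_i-X)\ge d(n-|X|)-\binom{d+1}{2}$. This should give that every nontrivial partition of $V-X$ is crossed by at least $d-|X|+1$ edges of each $H_i$. The remaining deficit would be covered by the well-balanced orientation of the trees. The exact linear coefficient $41/2$ would be fixed only after this bookkeeping.
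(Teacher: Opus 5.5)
Your proposal has a genuine gap. Everything rests on the ``refined orientation lemma'' of steps (i)--(ii), and you give no mechanism that could prove it.

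\textbf{Step (ii) has no valid tool.} The condition that $G-X$ be $2(q-|X|)$-edge-connected for all $|X|<q$ is Frank's \emph{conjecture}, not a characterisation. It is false for $q\ge3$ (Durand de Gevigney). Had it been true, it would give $f(q)\le 2q$ at once, and no rigidity would be needed. If you instead mean the cut condition for a fixed digraph, that is just the definition. The whole difficulty is to fix one orientation \emph{before} $X$ is known that works for every $X$ simultaneously.

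\textbf{Your ingredients do not achieve this.}
\begin{itemize}
\item The $(d-|X|)$-rigidity of $H_i-X$ is an undirected statement. It says nothing about the directions of the crossing edges in a fixed orientation. It also only guarantees $d-|X|$ crossing edges, not $d-|X|+1$.
\item Spanning trees are not robust under vertex deletion: a star centred at a vertex of $X$ disappears entirely.
\item A well-balanced orientation controls edge cuts of $G$, not of $G-X$.
\item The identity $25/2=2(5/2)^2$ is numerology. Nothing in your plan indicates that $d\approx 5q/2$ suffices.
\end{itemize}

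\textbf{What the paper does instead.} It takes $d=2q-2$ and $K=\frac{25}{2}\ell+4d$ with $\ell=q(q+1)$, so the $25/2$ has a different source. Two ideas are missing from your plan.

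First, orient $J_1$ with prescribed in-degrees and $J_2$ with prescribed out-degrees $g$, using Theorem~\ref{thm:hakimi}. Here $g=d$ outside a set $C=\{c_0,\dots,c_q\}$, and the total deficit on $C$ is $\sum_{c\in C}(d-g(c))=D=\binom{d+1}{2}$. The rigidity count $|E(J[X\cup S])|\le d|X\cup S|-D$, together with $d(q-1)<D$, forces every source and every sink strong component of $\vec G-S$ to meet $C$ (Lemma~\ref{lem:independent-prescribed-degrees}).

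Second, the leftover graph has at least $\frac{25}{4}\ell n$ edges. Theorems~\ref{thm:linked-subgraph} and~\ref{thm:strongly-linked} then give a strongly $\ell$-linked subgraph $L$ of it. One chooses $C\subseteq V(L)$ independent in $J_1\cup J_2$ via Tur\'an's bound; this independence makes the small prescribed degrees on $C$ feasible. In $L$ one routes internally disjoint $c_i\to c_j$ paths for all $\ell$ ordered pairs. This yields $q$ internally disjoint directed $c_i\to c_j$ paths. Hence the surviving vertices of $C$ lie in one strong component, contradicting distinct source and sink components.

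This robust linkage among $C$ is what needs quadratically many leftover edges per vertex. A linear number of arbitrary spanning trees does not provide it.
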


\begin{restatable}{theorem}{orientationthm}\label{thm:orientation}
For all sufficiently large integers $q$,
\[
f(q)\le 8q^2+212q+1404.
\]
In particular, $f(q)\le(8+o(1))q^2$.
\end{restatable}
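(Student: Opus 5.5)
The plan follows the route of Garamv\"olgyi, Jord\'an, Kir\'aly and Vill\'anyi \cite{GJKV}, with two changes: replace their rigidity packing by Theorem~\ref{thm:main}, and keep track of the edges left over after the packing. The bridge is their reduction. Suppose the graph $G$ contains two edge-disjoint spanning subgraphs $H_1,H_2$, each of which is $(4q-4)$-connected, or more precisely each of which has enough rigidity to yield, via the Jord\'an/GJKV orientation lemma, an orientation where each $H_i$ contains $q$ internally disjoint out-/in-paths to a root. Then $G$ has a $q$-connected orientation. The lemma is: if $G-X$ contains two edge-disjoint spanning trees for every $X\subseteq V$ with $|X|=q-1$, then $G$ has a $q$-connected orientation, by orienting one tree towards and one away from a root in each $G-X$ simultaneously. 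In GJKV this is achieved through two edge-disjoint $d$-rigid spanning subgraphs with $d\approx 4q$, at cost $2\cdot 10 d(d+1)$.

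Step 1 is to identify exactly what GJKV's orientation argument needs. I expect it is two edge-disjoint spanning subgraphs $H_1,H_2$ such that $H_i-X$ stays connected (indeed rigid in a lower dimension) for all $|X|\le q-1$, together with some extra slack. Rigidity is useful here because a $d$-rigid graph minus a vertex is $(d-1)$-rigid, so it is robust under vertex deletion.

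Step 2, for Theorem~\ref{thm:orientation-explicit}: apply Theorem~\ref{thm:main} with $s=2$ and $d_1=d_2=d$ chosen minimal so that the orientation argument closes. The bound $(25q^2+41q-16)/2$ suggests $d$ of order $5q/2$, since then $2d(d+1)\approx 25q^2/2$. The idea is to take $H_1,H_2$ to be $d$-rigid with $d$ around $\lceil 5q/2\rceil$, and to use the leftover edge set $F=E(G)-E(H_1)-E(H_2)$, which is large because $G$ has minimum degree at least $K$, to supply the remaining connectivity. I would verify the arithmetic by taking the precise $d$ as a function of $q$ and checking the $q\ge3$ parity cases.

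Step 3, for Theorem~\ref{thm:orientation}: choose unequal dimensions, or $d_1=d_2\approx 2q$ plus some spanning trees via $d_i=1$ factors, so that $K=8q^2+O(q)$. The linear correction $212q+1404$ comes from additional tree factors and additive constants used to handle the leftover structure for large $q$.

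The main obstacle is Step 1 combined with the leftover-edge argument. I must show precisely how the edges of $F$, or extra tree factors, compensate for choosing $d$ smaller than GJKV's $4q-4$. My first attempt would be a counting argument in the style of Theorem~\ref{thm:rank-defect}: show that after deleting any $X$ with $|X|=q-1$, the union of $H_i-X$ with a share of $F$ still satisfies the Nash-Williams–Tutte partition condition for two trees.
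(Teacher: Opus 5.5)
Your proposal has a genuine gap at its foundation: the reduction lemma in your first paragraph is false. For each fixed $X$ you can orient one tree of $G-X$ toward a root and the other away from it. But these orientations depend on $X$, and trees chosen for different sets $X$ will in general need opposite directions on a shared edge. Nothing yields a single orientation of $G$ that works for all $X$ at once, and that simultaneity is the whole difficulty of Thomassen's problem.

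In fact no such lemma can hold. In a $q$-connected orientation every vertex has in- and out-degree at least $q$: if $v$ had fewer than $q$ in-neighbours, deleting them would leave $v$ with no entering arc. So $K_{2q}$ has no $q$-connected orientation. Yet for $q\ge4$ the graph $K_{2q}$ is $(q+3)$-connected, so every $K_{2q}-X$ with $|X|=q-1$ is $4$-connected and contains two edge-disjoint spanning trees by Theorem~\ref{thm:tree-packing}. Your lemma would even give $f(q)\le q+3$.

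Consequently, the count you plan at the end (a Nash-Williams--Tutte condition for $H_i-X$ plus a share of the leftover edges) would only re-establish the hypothesis of this false lemma, and Step~3 has no mechanism behind it. The constants are also misread: $8q^2+212q+1404=2d(d+1)$ exactly for $d=2q+26$. The paper uses just two equal-dimensional factors and no tree factors.

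What is missing is a device that makes one fixed orientation work for every deletion set $S$. In the paper, this goes as follows.
\begin{enumerate}[label=\textup{(\arabic*)}]
\item Take edge-disjoint minimally $d$-rigid spanning subgraphs $J_1,J_2$ from Theorem~\ref{thm:main} with $d=2q+26$.
\item By Hakimi's theorem, orient $J_1$ with in-degree $d/2$ on a fixed $(d+1)$-set $C$ and $d$ elsewhere, and $J_2$ with the same out-degrees (Lemma~\ref{lem:prescribed-degrees}).
\item The sparsity bound $|E(J_i[X\cup S])|\le d|X\cup S|-\binom{d+1}{2}$ then forces every source and every sink strong component of $\vec G-S$, for $|S|\le q-1$, to contain at least $d+1-2|S|\ge29$ vertices of $C$ (Lemma~\ref{lem:vertices-in-C}).
\item The leftover graph has at least $d(d-1)n$ edges, so Theorem~\ref{thm:linked-subgraph} yields an $r$-linked subgraph $L$ with $r=\lfloor4d(d-1)/25\rfloor$, and $C$ is placed inside $L$.
\item Lemma~\ref{lem:auxiliary-oriented-graph} gives a random oriented graph $\Gamma$ on $C$ with at most $r$ arcs and at least $q$ arcs from $B$ to $A$ for every partition $C=A\dotunion B\dotunion Z$ with $|A|,|B|\ge29$ and $|Z|\le q-1$. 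Its arcs are realized by internally disjoint paths of $L$ using Theorem~\ref{thm:strongly-linked}.
\item Take $A=X\cap C$ for a source component $X$, $Z=S\cap C$, and $B$ the rest; $B$ contains the at least $29$ vertices of $C$ lying in a sink component. One of the $q$ paths from $B$ to $A$ avoids $S$ and enters $X$, a contradiction.
\end{enumerate}
This combination is what permits $d\approx2q$ rather than the $4q-4$ of Garamv\"olgyi et al.: degree prescriptions that concentrate all sources and sinks on $C$, linkage in the residual edges, and a sparse robust auxiliary digraph on $C$. None of it appears in your plan.
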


Thus the previous leading coefficient $320$ is reduced to $25/2$ for every
$q\ge3$ and to $8$ asymptotically for all sufficiently large $q$.

The stronger Thomassen--Jackson conjecture $f(q)=2q$ \jbj{\cite{Thomassen1989}  remains open for all $q>2$.}

\paragraph{Organization of the paper.}
Section~\ref{sec:preliminaries} presents the notation and preliminaries, and
Section~\ref{sec:packing} proves Theorems~\ref{thm:rank-defect}
and~\ref{thm:main} and some corollaries. Sections~\ref{sec:explicit-orientation}
and~\ref{sec:orientation} prove Theorems~\ref{thm:orientation-explicit}
and~\ref{thm:orientation}, respectively, and Section~\ref{sec:remarks} gives
concluding remarks and further consequences.

\section{Preliminaries}\label{sec:preliminaries}

\subsection{Notation}

All graphs in this paper are finite and simple. For a graph $G$, its vertex
set and edge set are denoted by $V(G)$ and $E(G)$, \jbj{respectively}, and
its order is $|V(G)|$. For $S\subseteq V(G)$, let $G[S]$ be the subgraph
induced by $S$ and put $G-S=G[V(G)\setminus S]$; we abbreviate
$G-\{v\}$ to $G-v$. For $F\subseteq E(G)$, let
$G-F=(V(G),E(G)\setminus F)$, and write $G-e$ when $F=\{e\}$.
We write $K(V)$ for the complete graph on vertex set $V$ and $K_n$ for the
complete graph of order $n$. For $X\subseteq V(G)$, let
$N_G(X)=\{v\in V(G)\setminus X:uv\in E(G)\text{ for some }u\in X\}$, put
$N_G(v)=N_G(\{v\})$, and write $d_G(v)=|N_G(v)|$ and
$\delta(G)=\min_{v\in V(G)}d_G(v)$. For disjoint sets
$A,B\subseteq V(G)$, let $E_G(A,B)$ be the set of edges with one end in each
set. The notation $A\dotunion B$ denotes a disjoint union. If $H$ and $F$
have the same vertex set, then $H\cup F$ has edge set
$E(H)\cup E(F)$. For a positive integer $k$, a graph $G$ is
{\bf $k$-connected} if $|V(G)|\ge k+1$ and $G-S$ is connected whenever
$S\subseteq V(G)$ and $|S|<k$.

For a matroid $M$ on a finite ground set $E$, its rank $r_M(X)$ is the
maximum size of an independent subset of $X\subseteq E$, and a {\bf base}
\jbj{of $M$} is an independent set of size $r_M(E)$.

A $d$-dimensional realization of a graph $G$ is a map
$p:V(G)\to\mathbb{R}^d$; it is {\bf generic} if its coordinates are
algebraically independent over $\mathbb{Q}$. The rigidity matrix $R(G,p)$
has one row for each edge, and at a generic realization its row-dependence
relations depend only on the graph. At a generic realization of $K(V)$, the
row matroid of $R(K(V),p)$ is the
{\bf $d$-dimensional generic rigidity matroid}
$\Rmat_d(K(V))$, with rank function $r_d$; see
\cite{AsimowRoth,Graver,SchulzeWhiteley}. For
\jbj{$H=(V,E_H)$}, let $\Rmat_d(H)$ be the
\jbj{{\bf restriction}} of $\Rmat_d(K(V))$ to $E_H$; its independent sets
are precisely the subsets of $E_H$ independent in $\Rmat_d(K(V))$. We write
$r_d(H)=r_d(E_H)$. For $U\subseteq V$, the notation
$\Rmat_d(K(U))$ has the analogous meaning. When $|V|\ge d$,
\begin{equation}\label{eq:complete-rank}
r_d(K(V))=d|V|-\binom{d+1}{2}.
\end{equation}
The graph $H$ is {\bf $d$-rigid} if $r_d(H)=r_d(K(V))$, and it is
{\bf minimally $d$-rigid} if $H$ is $d$-rigid but $H-e$ is not $d$-rigid
for every $e\in E_H$. Thus the edge set of a minimally $d$-rigid graph is a
base of $\Rmat_d(K(V))$. For $d=1$, $\Rmat_1(G)$ is the graphic matroid, so
$1$-rigidity is ordinary connectivity.

We next state the results used in the proofs.

\subsection{Rigidity and \jbj{matroids}}

We use the standard fact that generic rigidity implies the corresponding
vertex-connectivity; see, for example, Graver, Servatius and Servatius
\cite{GraverServatius} or Whiteley \cite{Whiteley}.

\begin{lemma}\label{lem:standard}
Let $d\ge2$. Every $d$-rigid graph on at least $d+1$ vertices is
$d$-connected.
\end{lemma}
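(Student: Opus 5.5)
We argue by contradiction, using a separating set. Let $H=(V,E_H)$ be $d$-rigid with $|V|\ge d+1$, and suppose $H$ is not $d$-connected. Since $|V|\ge d+1$, there is a set $S\subseteq V$ with $|S|\le d-1$ such that $H-S$ is disconnected. Write $V\setminus S=A\dotunion B$ with $A,B\neq\emptyset$ and $E_H(A,B)=\emptyset$. Then every edge of $H$ lies inside $A\cup S$ or inside $B\cup S$. Put $H_1=H[A\cup S]$ and $H_2=H[B\cup S]$, so that $E_H=E(H_1)\cup E(H_2)$ and the two edge sets are disjoint, since an edge inside $S$ may be assigned to $H_1$.

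By submodularity of the rank function of $\Rmat_d(K(V))$,
\[
r_d(H)\le r_d(H_1)+r_d(H_2)\le r_d(K(A\cup S))+r_d(K(B\cup S)).
\]
For a vertex set $U$ we use the bound $r_d(K(U))\le d|U|-\binom{d+1}{2}$ when $|U|\ge d$, and $r_d(K(U))=\binom{|U|}{2}$ when $|U|\le d+1$. A uniform way to express this is $r_d(K(U))\le d|U|-\binom{d+1}{2}+\binom{d+1-|U|}{2}$ whenever $|U|\le d$; this is just the formula $\binom{|U|}{2}$ rewritten. Set $a=|A\cup S|$, $b=|B\cup S|$ and $k=|S|$, so that $a+b=|V|+k$ and $a,b\ge k+1$.

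The key step is to show
\[
r_d(K(A\cup S))+r_d(K(B\cup S))<d|V|-\binom{d+1}{2}.
\]
In the main case $a,b\ge d$, the left side is at most $d(|V|+k)-2\binom{d+1}{2}$. This is strictly less than $d|V|-\binom{d+1}{2}$ exactly when $dk<\binom{d+1}{2}$, that is, when $k<(d+1)/2$. So this case only handles small separators, and the general case needs more care.

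The main obstacle is the range $k\le d-1$. Here the rigorous route is the standard gluing fact: the union of two frameworks sharing fewer than $d$ vertices has a nontrivial motion. Concretely, I would exhibit a nonzero non-trivial infinitesimal motion. Fix the realization on $A\cup S$. On $B\cup S$, apply an infinitesimal rotation whose axis contains the affine span of $p(S)$, which has dimension at most $k-1\le d-2$. Such a rotation fixes $S$ pointwise, and because $d\ge2$ it is nonzero on the generic points of $B$. This gives a motion that is infinitesimally trivial on each side but not trivial globally, since it is not a global isometry: it is zero on $A$, which contains a point off the span of $S$, yet nonzero on $B$. Hence $r_d(H)<d|V|-\binom{d+1}{2}=r_d(K(V))$, using \eqref{eq:complete-rank}, contradicting $d$-rigidity.

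I would therefore present the proof via this infinitesimal-motion argument on the rigidity matrix. One must check that a nontrivial infinitesimal motion lowers the rank below the complete-graph rank; this holds because at a generic $p$ with $|V|\ge d+1$, the trivial motions form a space of dimension exactly $\binom{d+1}{2}$.
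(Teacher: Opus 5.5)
\emph{The paper gives no proof of this lemma.} It is quoted as a standard fact from Graver--Servatius--Servatius and Whiteley, so your argument has to stand on its own. As written, it has a gap at the decisive step: the nontriviality of the glued motion.

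From ``$m$ vanishes on $A\cup S$ and is nonzero on $B$'' you can conclude that $m$ is not a trivial motion only when every trivial infinitesimal motion vanishing on $p(A\cup S)$ is zero, that is, when $|A\cup S|\ge d$. The fact that $A$ contains a point off the span of $p(S)$ is not enough. Your choice of rotation (any one whose fixed flat contains the affine span of $p(S)$) is also too loose. Two examples show the failure:
\begin{itemize}
\item For $d=3$, $S=\{s\}$, $A=\{a\}$, the rotation about the line through $p(s)$ and $p(a)$ is allowed by your construction and is nonzero on $B$. Yet the glued motion is exactly the global rotation about that line, hence trivial.
\item For $d=2$, $S=\varnothing$, $A$ an isolated vertex $a$, a rotation about $p(a)$ gives the same failure.
\end{itemize}
Swapping $A$ and $B$ repairs this only if one of $|A\cup S|,|B\cup S|$ is at least $d$, and that can fail (two disjoint edges with $d=3$). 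To save the geometric route you must choose the rotation so that the glued motion is not the restriction of a global trivial motion, or compare the dimension of the space of glued motions with $\binom{d+1}{2}$. That needs an argument you have not given.

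A cleaner repair is to finish your first, matroidal route. Your count stalls at $k<(d+1)/2$ only because you used subadditivity on $H_1,H_2$ instead of submodularity on the complete graphs containing them. Since $E_H\subseteq E(K(A\cup S))\cup E(K(B\cup S))$, these two complete graphs meet in $E(K(S))$, and $r_d(K(S))=\binom{k}{2}$ because $k\le d+1$, submodularity gives
\[
r_d(H)\le r_d(K(A\cup S))+r_d(K(B\cup S))-\binom{k}{2}.
\]
Write $r_d(K(U))=d|U|-\binom{d+1}{2}+c(|U|)$ with $c(u)=\binom{\max\{d+1-u,0\}}{2}$; this is valid for every $u$. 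Using the same identity for $\binom{k}{2}$, the right-hand side equals
\[
d|V|-\binom{d+1}{2}-\Bigl(\binom{d+1-k}{2}-c(a)-c(b)\Bigr).
\]
In your main case $a,b\ge d$, the slack is $\binom{d+1-k}{2}\ge1$, precisely because $k\le d-1$. In general, $a,b\ge k+1$ and $a+b-k=|V|\ge d+1$ give $c(a)+c(b)\le\binom{d+1-k}{2}-1$. Hence $r_d(H)<r_d(K(V))$, a contradiction, with no infinitesimal motions needed.
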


\jbj{Conversely, sufficiently high vertex-connectivity guarantees rigidity,
with a sharp threshold that is quadratic in the dimension.} Vill\'anyi
\cite[Theorem~1.1]{Villanyi} proved the following result.

\begin{theorem}\label{thm:villanyi-rigidity}
For every integer $d\ge2$, every $d(d+1)$-connected graph is $d$-rigid.
\end{theorem}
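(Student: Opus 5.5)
Since Theorem~\ref{thm:villanyi-rigidity} is quoted from \cite{Villanyi}, the paper needs no proof of it. If I had to prove it, I would argue by contradiction through infinitesimal motions and the decomposition into rigid components. Fix $d\ge2$, put $k=d(d+1)$ and $D=\binom{d+1}{2}$, and let $G=(V,E)$ be $k$-connected but not $d$-rigid. Take a generic realization $p$. Non-rigidity gives a non-trivial infinitesimal motion $u:V\to\mathbb{R}^d$ of $(G,p)$, that is, a vector in the kernel of $R(G,p)$ that is not the restriction of an infinitesimal isometry of $\mathbb{R}^d$.

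Next I would decompose $V$ along the rigid components of $G$, meaning the vertex sets of maximal $d$-rigid subgraphs. I would use three standard facts:
\begin{enumerate}[label=\textup{(\roman*)}]
\item every edge lies in exactly one rigid component;
\item two distinct components share at most $d-1$ vertices, since otherwise their union is rigid by the gluing lemma;
\item a vertex with at least $d$ neighbours in a component $C$ belongs to $C$, by the $0$-extension (Henneberg) move.
\end{enumerate}
Non-rigidity gives at least two components. On each component $C$ the motion $u$ restricts to an infinitesimal isometry, an element of a $D$-dimensional space, and these isometries are not all equal. Using (ii) and (iii) together with $k$-connectivity, every component $C\ne V$ has many \emph{boundary} vertices (vertices of $C$ lying in another component), at least $k$ of them, because they separate $C$ from the rest of the graph.

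The core is a counting argument that turns the boundary structure into a contradiction with $r_d(G)<d|V|-D$. I would estimate the rank from below by summing $d|C|-D$ over components and subtracting the overlaps. For this I would build the incidence (hyper)graph between components and the vertices lying in at least two components, and assign each such shared vertex a weight of $d$ degrees of freedom. Each component is charged $D$ for its trivial motions, and each component must have at least $k=2D$ incident shared vertices. A discharging argument should then show that the total excess degrees of freedom cannot be positive. Here the factor $d(d+1)=2D$ is what makes the charging balance, which matches the sharpness examples of \cite{GJKV}.

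The main obstacle is that for $d\ge3$ no exact cover formula for $r_d$ exists, so the naive inequality goes the wrong way. I would follow Vill\'anyi's idea: choose the motion $u$ cleverly, for instance by pinning a maximal rigid component and taking a generic non-trivial motion relative to it. Then the vertices where the induced isometries differ yield a vertex separator of size at most $k-1$, contradicting $k$-connectivity. The size bound on this separator, namely that a set of fewer than $d(d+1)$ vertices suffices to block all non-trivial flexing, is the step I expect to be hardest.
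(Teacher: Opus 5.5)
You are right that the paper gives no proof here. The theorem is imported from Vill\'anyi \cite{Villanyi} (Theorem~1.1 there) and used only as a black box, in the proof of Lemma~\ref{lem:rank-bound}, to conclude that $J_L+A$ is $d$-rigid. Citing it is therefore exactly what the paper does.

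The sketch you add, however, is not a proof, and its gap is at the centre. The discharging step is only asserted. You yourself note that the inequality it would rest on goes the wrong way: for $d\ge3$, summing $d|C|-D$ over rigid components bounds $r_d(G)$ only from \emph{above}, and no cover formula for $r_d$ is known. The step you propose in its place is that a cleverly chosen non-trivial motion yields a vertex separator of fewer than $d(d+1)$ vertices. That is just the contrapositive of the theorem: a non-$d$-rigid graph on more than $d(d+1)$ vertices has a separator of size less than $d(d+1)$. So the sketch reduces the statement to itself and leaves all of its content, in particular where the count $d(d+1)$ comes from, unproved.

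Two of the auxiliary facts are also wrong as stated.
\begin{itemize}
\item Fact (i) fails for $d\ge3$. Two copies of $K_4$ glued along an edge form a flexible graph in $\mathbb{R}^3$ ($11$ edges against full rank $12$). Its rigid components are the two copies of $K_4$, and both contain the shared edge. In general, distinct components may share up to $d-1$ vertices, and hence edges.
\item The claim that every component $C\ne V$ has at least $k$ boundary vertices needs $C$ to contain a vertex lying in no other component. Only then do the boundary vertices separate that vertex from $V\setminus C$. If every vertex of $C$ is shared, the boundary set separates nothing, and the argument gives no bound.
\end{itemize}
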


We use the following rank form of the matroid union theorem of Edmonds
\cite{Edmonds,EdmondsPartition}; see also Oxley \cite{Oxley}.

\begin{theorem}\label{thm:edmonds}
Let $M_1,\ldots,M_t$ be matroids on a common finite ground set $E$, with rank
functions $r_1,\ldots,r_t$. The maximum size of a set that can be partitioned
as $I_1\dotunion\cdots\dotunion I_t$, where $I_i$ is independent in $M_i$,
is
\[
\min_{X\subseteq E}\left(|E\setminus X|+\sum_{i=1}^{t}r_i(X)\right).
\]
\end{theorem}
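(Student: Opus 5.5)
The statement is Edmonds' classical matroid union theorem, which the paper quotes rather than proves. To prove it I would use the usual min--max scheme: a one-line weak-duality inequality, and an augmenting-path argument for the reverse direction.

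\emph{Weak duality.} Let $I=I_1\dotunion\cdots\dotunion I_t$ with each $I_i$ independent in $M_i$, and let $X\subseteq E$. Then
\[
|I| = |I\setminus X| + \sum_{i=1}^t |I_i\cap X| \le |E\setminus X| + \sum_{i=1}^t r_i(X),
\]
since $I_i\cap X$ is an independent subset of $X$ in $M_i$. Hence the maximum is at most the minimum.

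\emph{Exchange digraph.} For the reverse inequality, fix a partitionable set $I=I_1\dotunion\cdots\dotunion I_t$ of maximum size. Build a digraph $D$ on $E$ with an arc $x\to y$ whenever $y\in I_i$, $x\notin I_i$, and $I_i-y+x$ is independent in $M_i$, for some $i$. Call $x$ \emph{free for $i$} if $x\notin I_i$ and $I_i+x$ is independent in $M_i$. Let $X$ be the set of elements reachable in $D$ from $E\setminus I$; it contains $E\setminus I$.

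\emph{Augmentation claim.} Suppose some $x\in X$ is free for some $i$. Take a shortest path from $E\setminus I$ to such an element. Performing the exchanges along the path, and then adding the last element to $I_i$, gives a partitionable set of size $|I|+1$, contradicting maximality. This is the main obstacle, because several exchanges may hit the same $I_j$ simultaneously. I would handle it with the standard lemma: if $J$ is independent and the exchange pairs $(x_k,y_k)$ induce a unique perfect matching in the exchange graph of $J$, then $J-\{y_k\}+\{x_k\}$ is independent. Minimality of the path guarantees that uniqueness. To cover the final addition, I would first add a loop-free dummy coordinate to each matroid, so that ``free'' becomes an ordinary exchange arc.

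\emph{Tight $X$.} So no element of $X$ is free for any $i$. Take $x\in X$ and any $i$.
\begin{itemize}
\item If $x\in I_i$, then trivially $x\in\operatorname{span}_{M_i}(I_i\cap X)$.
\item Otherwise $I_i+x$ contains a unique circuit $C$. Every $y\in C-x$ satisfies that $I_i-y+x$ is independent, so $x\to y$ is an arc and $y\in X$.
\end{itemize}
Hence $C-x\subseteq I_i\cap X$, and again $x\in\operatorname{span}_{M_i}(I_i\cap X)$. Therefore $r_i(X)=|I_i\cap X|$ for every $i$.

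\emph{Conclusion.} Since $E\setminus X\subseteq I$,
\[
|E\setminus X| + \sum_{i=1}^t r_i(X) \le |I\setminus X| + |I\cap X| = |I|.
\]
Combined with weak duality, this shows the maximum equals the minimum.
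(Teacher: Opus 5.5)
The paper gives no proof of this statement: it quotes Edmonds' theorem (with a pointer to Oxley) and uses it as a black box in the proof of Theorem~\ref{thm:main}. Your proposal is the standard augmenting-path proof of the matroid partition theorem, and it is correct.

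\begin{itemize}
\item Weak duality is immediate.
\item A shortest path has no shortcut arcs. So, for each $I_j$ separately, the exchange pairs on the path form a triangular pattern, and the bipartite exchange graph of $I_j$ therefore has a unique perfect matching.
\item The reachable set $X$ is a tight certificate. Every element of the fundamental circuit of $x$ in $I_i+x$ is an out-neighbor of $x$, so $X\subseteq\operatorname{span}_{M_i}(I_i\cap X)$ for every $i$.
\end{itemize}

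One point deserves a precise statement. The dummy element $z$ you add to $M_i$ must be a \emph{coloop}: take $M_i\oplus U_{1,1}$ and put $z$ into $I_i$. Then:
\begin{itemize}
\item $I_i+z-y+x$ is independent exactly when $I_i-y+x$ is;
\item $I_i+z-z+x$ is independent exactly when $x$ is free for $i$;
\item no earlier vertex of a shortest path is free, so the pair $(x_m,z)$ sits at the bottom of the triangular pattern.
\end{itemize}
A merely non-loop extension would not do. For instance, if $z$ were parallel to some $x$, it would destroy exchange arcs that the unique-matching lemma needs. Your phrase ``dummy coordinate'' suggests you meant the coloop, but it should be said explicitly.

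Compared with the usual textbook derivation, your route has a different trade-off. The textbook route applies matroid intersection to $M_1\oplus\cdots\oplus M_t$ on $t$ copies of $E$, together with the partition matroid that lets each element be used at most once. That is shorter only if the intersection theorem is taken as known. Your argument is self-contained apart from the unique-matching exchange lemma, and it is constructive. It is precisely Edmonds' partition algorithm, the procedure the paper later invokes in Proposition~\ref{prop:algorithm}. It also produces an explicit minimizer $X$ alongside a maximum partitionable set.
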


\subsection{Packings and orientations}

Garamv\"olgyi, Jord\'an, Kir\'aly and Vill\'anyi \cite[Lemma~6.1]{GJKV}
established the following general lower-bound construction.

\begin{lemma}\label{lem:lower-bound-construction}
Let $s\ge1$ and let $d_1,\ldots,d_s$ be positive integers. Set
$K=\sum_{i=1}^{s}d_i(d_i+1)-1$, and assume $K\ge3$. There exist
infinitely many $K$-connected
graphs $G$ that do not contain pairwise edge-disjoint spanning subgraphs
$G_1,\ldots,G_s$ such that $G_i$ is $d_i$-rigid for every
$i\in\{1,\ldots,s\}$.
\end{lemma}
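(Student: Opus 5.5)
The plan is a blow-up construction combined with a counting argument via Theorem~\ref{thm:edmonds}. Put $D_i=\binom{d_i+1}{2}$. Then $\sum_i D_i=(K+1)/2$, since $K+1=\sum_i d_i(d_i+1)=2\sum_i D_i$.

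\emph{Construction.} Take a $K$-regular, $K$-connected simple graph $G_0$ on $m$ vertices, with $m$ arbitrarily large; for instance a suitable circulant or Harary graph, choosing $m$ even if $K$ is odd. Fix an integer $N\ge K+1$. Replace each vertex $x$ of $G_0$ by a clique $Q_x\cong K_N$. For each edge $xy$ of $G_0$, add one edge between $Q_x$ and $Q_y$, chosen so that the $K$ edges leaving any clique have $K$ distinct endpoints in that clique. This is possible since $N\ge K$. Call the result $G$; it has $n=mN$ vertices.

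\emph{Connectivity.} I will show that $G$ is $K$-connected. Let $S\subseteq V(G)$ with $|S|<K$.
\begin{itemize}
\item Each $Q_x-S$ is still a nonempty clique, because $N\ge K+1>|S|$.
\item Two surviving clique remnants $Q_x-S$ and $Q_y-S$ are joined through the quotient structure. The edges of $G_0$ whose corresponding $G$-edge meets $S$ form a set of fewer than $K$ edges; deleting them leaves $G_0$ connected, because $G_0$ is $K$-connected and hence $K$-edge-connected.
\item Every quotient edge that survives has both endpoints outside $S$ and so links the two remnants.
\end{itemize}
Hence $G-S$ is connected. The hypothesis $K\ge3$ is used only so that such $G_0$ exist for infinitely many $m$ and that the bookkeeping is clean; if a degenerate case appears, I would increase $N$.

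\emph{Non-packability.} Suppose $G_1,\ldots,G_s$ were pairwise edge-disjoint spanning subgraphs with $G_i$ being $d_i$-rigid. Taking a base of each $G_i$ gives a set partitioned into $I_1\dotunion\cdots\dotunion I_s$ with $I_i$ independent in $\Rmat_{d_i}(K(V))$ and $|I_i|=d_in-D_i$. By Theorem~\ref{thm:edmonds} applied to the restrictions to $E(G)$, this forces, for every $X\subseteq E(G)$,
\[
\sum_{i=1}^s (d_in-D_i)\le |E(G)\setminus X|+\sum_{i=1}^s r_{d_i}(X).
\]
Choose $X$ to be the union of all clique edges. Since the cliques are vertex-disjoint, rigidity rank is additive over them, so by \eqref{eq:complete-rank} we get $r_{d_i}(X)\le m(d_iN-D_i)$. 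Also $|E(G)\setminus X|=mK/2$. The right-hand side is therefore at most
\[
\sum_i d_in-m\sum_i D_i+\frac{mK}{2}=\sum_i d_in-\frac{m}{2}.
\]
This is smaller than $\sum_i(d_in-D_i)$ as soon as $m>2\sum_iD_i=K+1$, a contradiction. Letting $m$ grow produces infinitely many such graphs.

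\emph{Main obstacle.} I expect the main difficulty to be the connectivity verification, in particular making the distinct-endpoint attachment rigorous and ensuring a suitable $K$-regular, $K$-connected $G_0$ exists for all $K\ge3$ and infinitely many $m$. The rank count itself is routine once additivity over vertex-disjoint cliques is noted.
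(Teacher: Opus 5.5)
Your proposal is correct. The paper does not reprove this lemma but imports it from \cite[Lemma~6.1]{GJKV}, and your argument is the standard construction behind it: blow up a $K$-regular $K$-connected graph into cliques, then use the rank count $\sum_i d_in-m/2<\sum_i(d_in-D_i)$ for $m>K+1$. The points you flagged as obstacles are routine: Harary graphs give $G_0$ for every even $m\ge K+1$, $m$ must be even because $K+1=2\sum_i D_i$ makes $K$ always odd, and the attachment only needs $N\ge K$. The appeal to Theorem~\ref{thm:edmonds} is valid but unnecessary, since the direct count $\sum_i(d_in-D_i)\le\sum_i r_{d_i}(X)+|E(G)\setminus X|$ over the bases of the $G_i$ already gives the contradiction.
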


Nash-Williams and Tutte \cite{NashWilliams,Tutte} proved the spanning-tree
packing theorem; we use the following form.

\begin{theorem}\label{thm:tree-packing}
Let $G$ be a graph and let $t\ge1$. If $G$ is $2t$-edge-connected, then it
contains $t$ pairwise edge-disjoint spanning trees. In particular, the same
conclusion holds when $G$ is $2t$-connected.
\end{theorem}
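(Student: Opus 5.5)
The plan is to derive the first statement from the Nash-Williams--Tutte partition criterion: a graph $G$ contains $t$ pairwise edge-disjoint spanning trees if and only if, for every partition $\mathcal{P}$ of $V(G)$ into nonempty classes, the number $e_G(\mathcal{P})$ of edges joining different classes satisfies
\[
e_G(\mathcal{P})\ge t(|\mathcal{P}|-1).
\]
If we want to stay within the tools already stated, this criterion follows from Theorem~\ref{thm:edmonds} applied with $M_1=\cdots=M_t$ equal to the graphic matroid of $G$. We need a common independent partition of size $t(|V|-1)$. For $X\subseteq E(G)$, let $\mathcal{P}_X$ be the partition of $V$ into the components of $(V,X)$, so that $r(X)=|V|-|\mathcal{P}_X|$. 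Replacing $X$ by its closure only decreases $|E\setminus X|$ and keeps $r(X)$. Hence the minimum in Theorem~\ref{thm:edmonds} is attained on sets $X$ of the form $X=\bigcup_{P\in\mathcal{P}}E(G[P])$. For such $X$ we have $|E\setminus X|=e_G(\mathcal{P})$, so the bound $\ge t(|V|-1)$ is exactly the partition condition.

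Next we verify the partition condition under $2t$-edge-connectivity. Fix a partition $\mathcal{P}$ with $p=|\mathcal{P}|\ge 2$; the case $p=1$ is trivial. Every class $P$ is a proper nonempty subset of $V$, so $|E_G(P,V\setminus P)|\ge 2t$. Summing over the classes counts each crossing edge exactly twice, which gives
\[
2e_G(\mathcal{P})=\sum_{P\in\mathcal{P}}|E_G(P,V\setminus P)|\ge 2tp.
\]
Thus $e_G(\mathcal{P})\ge tp\ge t(p-1)$, and the criterion yields the $t$ spanning trees.

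For the final sentence we invoke Whitney's inequality: vertex-connectivity never exceeds edge-connectivity. To see this, suppose $G$ is $2t$-connected and let $C=E_G(A,V\setminus A)$ be a minimum edge cut with $|C|<2t$. Choose one endpoint of each edge of $C$ so as to avoid at least one vertex on each side; this is possible unless every vertex on one side is incident to $C$, in which case a direct degree count still gives a vertex separator of size at most $|C|$. Deleting the chosen vertices disconnects $G$, contradicting $2t$-connectivity. Since $|V(G)|\ge 2t+1$, the graph is also nontrivial, so $G$ is $2t$-edge-connected and the first part applies.

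The only step needing care is the reduction of Edmonds' minimum to partition-type sets $X$. I would handle it by the closure argument above, noting that the closure of $X$ in the graphic matroid consists of all edges inside the components of $(V,X)$.
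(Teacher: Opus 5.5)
Your proof is correct. It differs from the paper only in that the paper gives no proof at all: Theorem~\ref{thm:tree-packing} is quoted from Nash-Williams and Tutte, the vertex-connectivity clause is treated as immediate, and the result is invoked only for the $k=1$ cases of Corollaries~\ref{cor:equal-packings}(b) and~\ref{cor:tree-removal}(b).

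You instead make the statement self-contained, using only Theorem~\ref{thm:edmonds}, which the paper already states.
\begin{itemize}
\item Your closure reduction correctly turns Edmonds' minimum into the partition condition $e_G(\mathcal P)\ge t(|\mathcal P|-1)$. A union of $t$ forests of total size $t(|V|-1)$ then forces each forest to be a spanning tree.
\item Double counting the cuts $E_G(P,V\setminus P)$ gives even more than needed, namely $e_G(\mathcal P)\ge t|\mathcal P|$.
\item Whitney's inequality handles the final sentence.
\end{itemize}

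One point of wording in the Whitney step. Suppose every vertex of one side $A$ meets $C$. Then the degree count gives $d_G(a)\le(|A|-1)+d_C(a)\le|C|<2t$ for any $a\in A$. This contradicts $\delta(G)\ge 2t$ directly, rather than always exhibiting a separator. In the complementary case, any choice of one endpoint per edge of $C$ works, because it misses an untouched vertex on each side.

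The paper's own machinery also reproves the vertex-connected clause. Theorem~\ref{thm:main} with $d_1=\cdots=d_t=1$ uses only Lemmas~\ref{lem:fragment-edge-bound}, \ref{lem:t-prime-scaling} and~\ref{lem:rank-bound-one} plus Theorem~\ref{thm:edmonds}, with no need for Jackson--Jord\'an or Vill\'anyi. However, Lemma~\ref{lem:fragment-edge-bound} relies on vertex-disjoint Menger paths, so that route does not directly give the edge-connected version. Your argument covers that version as well.
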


Hakimi \cite{Hakimi} proved the following criterion for
\jbj{the existence of orientations with} prescribed in-degrees.

\begin{theorem}\label{thm:hakimi}
Let $F=(V,E)$ be a graph and let $g:V\to\mathbb Z_{\ge0}$. There is an
orientation of $F$ with in-degree $g(v)$ at every $v\in V$ if and only if
$|E(F[X])|\le g(X)$ for every $X\subseteq V$ and $|E|=g(V)$, where
$g(X)=\sum_{v\in X}g(v)$. The same criterion holds for prescribed
out-degrees.
\end{theorem}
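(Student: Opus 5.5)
The statement immediately above is Hakimi's orientation criterion, Theorem~\ref{thm:hakimi}. I would prove it by reducing it to Hall's theorem in a bipartite ``edge--vertex'' incidence graph, with necessity handled by a direct count.

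\emph{Necessity.} Suppose an orientation with in-degree $g(v)$ at every $v$ exists. Every edge has exactly one head, so $|E|=\sum_v g(v)=g(V)$. Every edge of $F[X]$ has its head in $X$, so $|E(F[X])|$ is at most the total in-degree of $X$, which is $g(X)$.

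\emph{Sufficiency.} Build the bipartite graph $B$ with one side $E$ and the other side consisting of $g(v)$ copies of each vertex $v\in V$. Join an edge $e=uv$ to every copy of $u$ and every copy of $v$. A matching of $B$ that saturates $E$ assigns to each edge one of its ends as its head, and no vertex $v$ receives more than $g(v)$ heads. To get such a matching I verify Hall's condition for the $E$ side. Take $A\subseteq E$ and let $X$ be the set of ends of edges in $A$. The neighbourhood of $A$ in $B$ consists of all copies of the vertices of $X$, so it has size $g(X)$. Since $A\subseteq E(F[X])$, the hypothesis gives $|A|\le |E(F[X])|\le g(X)$, so Hall's condition holds.

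The resulting orientation has in-degree at most $g(v)$ at each $v$. The in-degrees sum to $|E|=g(V)=\sum_v g(v)$, so every inequality must be an equality and each in-degree is exactly $g(v)$.

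\emph{Out-degree version.} Apply the in-degree statement with the same $g$ and then reverse every arc. In-degrees become out-degrees, and the hypotheses are unchanged.

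There is no real obstacle here. The only point needing care is the set-up of the Hall condition: the neighbourhood of $A$ must be measured with multiplicities $g$, which is why each vertex $v$ is replaced by $g(v)$ copies. Equivalently, one could use a max-flow/min-cut argument, where the cut condition reduces to the same inequality $|E(F[X])|\le g(X)$.
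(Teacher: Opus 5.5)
Your proof is correct and complete. The paper does not prove this statement; it quotes Hakimi's theorem and uses it as a black box in Lemmas~\ref{lem:independent-prescribed-degrees} and~\ref{lem:prescribed-degrees}, so there is no argument of the authors to compare against. Your reduction to Hall's theorem is a standard self-contained proof, and each step is sound: necessity by counting heads, the $g(v)$ copies of each vertex, Hall's condition from $A\subseteq E(F[X])$, equality of in-degrees forced by $|E|=g(V)$, and the out-degree case by reversal. Hall's condition also takes care of any edge $uv$ with $g(u)=g(v)=0$ automatically, since the hypothesis for $X=\{u,v\}$ already excludes it.
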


We also use two standard results on graph linkages. A {\bf linkage problem}
of order $r$ is a collection $\{\{s_i,t_i\}:1\le i\le r\}$ of pairs of
vertices. A solution consists of paths joining $s_i$ to $t_i$ that are
pairwise internally disjoint and may meet only at common prescribed ends.
A graph is {\bf strongly $r$-linked} if every linkage problem of order $r$
has a solution; it is {\bf $r$-linked} if this is required when the $2r$
prescribed ends are distinct.

Kawarabayashi and Mohar \cite[Theorem~5.2]{KawarabayashiMohar} proved the
first part of the next lemma, and the final assertion follows from the
linkage theorem of Thomas and Wollan \cite{ThomasWollan}.

\begin{theorem}\label{thm:linked-subgraph}
Let $r\ge1$ and let $F$ be an $n$-vertex graph. If
$n\ge5r/2$ and
\[
|E(F)|\ge \frac{25}{4}rn-\frac{25}{2}r^2,
\]
then $F$ contains a $2r$-connected subgraph $L$ satisfying
$|E(L)|\ge5r|V(L)|$. In particular, $L$ is $r$-linked.
\end{theorem}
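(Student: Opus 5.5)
Since Theorem~\ref{thm:linked-subgraph} combines two published results, the plan is to recover the first part with a Mader-type minimal-counterexample argument and then quote Thomas and Wollan for linkedness. Put
\[
g(m)=\tfrac{25}{4}rm-\tfrac{25}{2}r^2 .
\]
Among all subgraphs $L\subseteq F$ with $|V(L)|\ge 5r/2$ and $|E(L)|\ge g(|V(L)|)$, choose one with $|V(L)|$ minimal, and then $|E(L)|$ minimal. Such a subgraph exists because $F$ itself qualifies.

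\textbf{Step 1: minimum degree.} If some vertex $v$ had $d_L(v)\le 25r/4$, then $L-v$ would still satisfy $|E(L-v)|\ge g(|V(L)|-1)$. This contradicts minimality, unless $|V(L)|-1<5r/2$. That boundary case needs a separate check: $g$ is small or negative when $m$ is close to $5r/2$, so those tiny subgraphs have to be handled directly. Similarly, edge-minimality lets us assume the inequality is nearly tight. Hence $\delta(L)>25r/4$, and in particular $|V(L)|>25r/4+1$.

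\textbf{Step 2: $2r$-connectivity.} Suppose $L$ has a separation $(A,B)$ with $S=A\cap B$ and $|S|<2r$. Both $A\setminus S$ and $B\setminus S$ are nonempty, and the degree bound gives $|A|,|B|>25r/4\ge 5r/2$. By minimality, $|E(L[A])|<g(|A|)$ and $|E(L[B])|<g(|B|)$. Then
\[
|E(L)|\le|E(L[A])|+|E(L[B])|<g(|A|)+g(|B|)=g(|V(L)|)+\tfrac{25}{4}r|S|-\tfrac{25}{2}r^2\le g(|V(L)|),
\]
since $|S|\le 2r$. This contradicts $|E(L)|\ge g(|V(L)|)$, so $L$ is $2r$-connected.

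\textbf{Step 3: edge density.} We need $|E(L)|\ge 5r|V(L)|$. From $|E(L)|\ge g(|V(L)|)$ this holds once $|V(L)|\ge 10r$. Combining $\delta(L)>25r/4$ with $|E(L)|\ge \tfrac12\delta(L)|V(L)|$ only yields about $3.125r|V(L)|$, so neither bound suffices alone.

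I expect this density step to be the main obstacle. The first thing to try is to push the lower bound on $|V(L)|$ up to $10r$ by counting edges inside a small dense graph: if $|V(L)|<10r$, then $|E(L)|\le\binom{|V(L)|}{2}$, and comparing this with $g(|V(L)|)$ should exclude the relevant small range. If that fails, I would strengthen the minimality setup by building the density requirement into the class of admissible subgraphs, as Kawarabayashi and Mohar do. Failing that, I would simply invoke \cite[Theorem~5.2]{KawarabayashiMohar} for the first assertion.

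\textbf{Step 4: linkedness.} With a $2r$-connected $L$ satisfying $|E(L)|\ge 5r|V(L)|$ in hand, the theorem of Thomas and Wollan \cite{ThomasWollan} states that every $2k$-connected graph with at least $5k|V|$ edges is $k$-linked. Applying it with $k=r$ shows that $L$ is $r$-linked.
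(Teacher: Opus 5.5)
Your route is correct. It differs from the paper only in that the paper gives no proof of the first assertion. The paper cites \cite[Theorem~5.2]{KawarabayashiMohar} for the dense $2r$-connected subgraph and \cite{ThomasWollan} for linkedness. You instead rederive the first part by Mader's minimal-subgraph argument and quote only Thomas--Wollan. Note that $g(m)=\tfrac{25}{4}r(m-2r)$ is the usual $\gamma(m-k)$ form with $\gamma=25r/4$ and $k=2r$. Your version is self-contained; the citation is shorter.

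The step you flagged as the main obstacle closes at once by the comparison you proposed. Put $h(m)=m^2-(1+\tfrac{25}{2}r)m+25r^2$. Then $\binom m2<g(m)$ if and only if $h(m)<0$. Since $h$ is convex with $h(5r/2)=-5r/2<0$ and $h(10r)=-10r<0$, no simple graph on $m$ vertices with $5r/2\le m\le 10r$ satisfies $|E|\ge g(m)$. Hence every admissible subgraph, in particular your minimal $L$, has $|V(L)|\ge 10r+1$, and then
\[
|E(L)|\ge g(|V(L)|)=5r|V(L)|+\tfrac54 r\bigl(|V(L)|-10r\bigr)\ge 5r|V(L)|.
\]

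The same observation settles the other loose ends:
\begin{itemize}
\item It disposes of the boundary case in Step~1, since $|V(L)|-1\ge 10r\ge 5r/2$.
\item It supplies the bound $|V(L)|\ge 2r+1$ that the definition of $2r$-connectivity requires in Step~2.
\item Edge-minimality is never used, so the fallbacks you list are unnecessary.
\item Your separation inequality also works when $|S|=2r$, so the minimal $L$ is in fact $(2r+1)$-connected.
\end{itemize}
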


Mader \cite{MaderStrong} proved that ordinary linkedness already gives the
version with repeated prescribed ends; see also Kawarabayashi, Lee, Reed and
Wollan \cite{KawarabayashiLeeReedWollan}.

\begin{theorem}\label{thm:strongly-linked}
Every $r$-linked graph on at least $2r$ vertices is strongly $r$-linked.
\end{theorem}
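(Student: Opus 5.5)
The plan is a reduction of the strongly linked case to the ordinary $r$-linked case. Repeated prescribed ends are split apart using fresh neighbours, the resulting problem with distinct ends is solved, and the extra edges are then attached. Before the reduction I will prove that an $r$-linked graph $G$ on $n\ge 2r$ vertices is $(2r-1)$-connected, and in particular $\delta(G)\ge 2r-1$.

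\emph{Connectivity.} If $G$ is complete, every degree is $n-1\ge 2r-1$. Otherwise suppose some $S\subseteq V(G)$ with $|S|\le 2r-2$ separates two vertices $a$ and $b$. Form a linkage problem with distinct ends: one pair is $\{a,b\}$, and the vertices of $S$ are paired among themselves. Because $n\ge 2r$ we can pad with further vertices, so that there are exactly $r$ pairs and $2r$ distinct ends. In any solution the paths are vertex-disjoint, since the ends are distinct. The $a$--$b$ path must meet $S$, but every vertex of $S$ is an end of another path. This is a contradiction, so $G$ is $(2r-1)$-connected.

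\emph{Reduction.} Let $\{\{s_i,t_i\}:1\le i\le r\}$ be an arbitrary linkage problem.
\begin{enumerate}[label=\textup{(\roman*)}]
\item Pairs with $s_i=t_i$ are solved by the trivial path and are discarded.
\item For each terminal $v$ of multiplicity $m_v$, keep $v$ as the end for one occurrence. For the other $m_v-1$ occurrences, choose distinct new neighbours $u$ of $v$. These must lie outside the terminal set and must not already have been chosen.
\item Replace each such occurrence of $v$ by its chosen $u$.
\end{enumerate}

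\emph{Counting.} The modified problem has at most $2r$ distinct ends in total. So when a new neighbour of $v$ is chosen, at most $2r-2$ ends other than $v$ are already occupied or reserved. Since $\deg v\ge 2r-1$, a free neighbour always exists. This holds even when the same pair $\{s,t\}$ occurs several times, because the substitution is done independently at each end.

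\emph{Solving and lifting.}
\begin{enumerate}[label=\textup{(\roman*)}]
\item Pad the modified problem with dummy pairs on unused vertices, possible since $n\ge 2r$, until there are $r$ pairs with $2r$ distinct ends.
\item Apply $r$-linkedness. This gives pairwise vertex-disjoint paths, and none of them passes through an end of another.
\item Extend each path ending at a substitute $u$ by the edge $uv$.
\end{enumerate}
The extended paths meet only at the original shared terminals $v$, which is exactly what a solution of the original problem requires.

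\emph{Main obstacle.} I expect the connectivity lemma, and the bookkeeping showing that enough fresh neighbours exist, to be the main difficulty. In particular, the padding argument uses $n\ge 2r$ both in the separator step and in the final padding step.
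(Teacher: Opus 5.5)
Your argument is correct in substance, but it takes a different route from the paper. The paper does not prove this statement; it cites Mader's theorem, with Kawarabayashi, Lee, Reed and Wollan as a secondary reference. You instead give a self-contained elementary proof in two steps:
\begin{itemize}
\item $r$-linkedness on at least $2r$ vertices forces $(2r-1)$-connectivity, hence $\delta(G)\ge 2r-1$. (When $|S|$ is odd, one vertex of $S$ is paired with a padding vertex; this exists because then $|S|\le 2r-3$.)
\item This minimum degree is exactly what you need to move all but one occurrence of each repeated end onto distinct fresh neighbours. You then solve a problem with $2r$ distinct ends and reattach the edges $uv$.
\end{itemize}
Your count holds: at every choice there are at most $2r-2$ forbidden vertices besides $v$. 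The lifting also holds: each original terminal $v$ lies on exactly one modified path, as an end, so the extended paths are genuine paths that meet only at common original ends. The citation buys brevity. Your route makes the paper self-contained here and shows the fact is essentially folklore once you note the minimum-degree consequence of linkedness.

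One step needs repair. In step (i) you discard a pair with $s_i=t_i=v$. Under the paper's definition, the trivial path $\{v\}$ still constrains the other paths: none of them may contain $v$ unless $v$ is one of its own ends. Suppose $v$ occurs in no other pair. After discarding, $v$ is no longer a terminal, so it may be chosen as a substitute or padding vertex, and the $r$-linkage may route another path through it. The lifted family is then not a solution.

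The fix is easy. If $v$ occurs in another pair, discarding is harmless, since $v$ remains a kept end there. Otherwise:
\begin{itemize}
\item Replace $\{v,v\}$ by $\{v,u\}$ for some vertex $u$ outside all current ends and substitutes. The degenerate pair already used two of the $2r$ slots, so your count is unchanged, and $u$ need not even be a neighbour of $v$.
\item Solve the modified problem.
\item Replace the resulting $v$--$u$ path by the trivial path $\{v\}$.
\end{itemize}
If degenerate pairs are not regarded as pairs under the paper's definition, step (i) is simply vacuous. In the paper's two applications, with pairs $\{c_i,b_{ij}\}$ and arcs of $\Gamma$, they never occur.
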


\subsection{Vertex connectivity}

We use the terminology and notation of Jackson and Jord\'an
\cite{JacksonJordanAug}. Let $s\ge2$ be an integer, and let $J=(W,E)$ be a graph with
$|W|\ge s+1$. For a nonempty set $X\subsetneq W$, put
$X^{*}=W\setminus(X\cup N_J(X))$.
The set $X$ is a {\bf fragment} of $J$ if $X^{*}\ne\varnothing$, and it is
{\bf $s$-deficient} if $|N_J(X)|<s$. A fragment $X$ {\bf separates} an
unordered pair $\{u,v\}$ if one of $u,v$ belongs to $X$ and the other belongs
to $X^{*}$. A family of fragments is {\bf half-disjoint} if every unordered
pair of vertices is separated by at most two members of the family.
Thus $N_J(X)$ separates $X$ from the nonempty set $X^{*}$, and
$s-|N_J(X)|$ measures how far this separator falls below the target size
$s$. Half-disjointness ensures that the endpoints of each added edge are
separated by at most two fragments in the family.

Let
\begin{equation}\label{eq:t-prime-definition}
t'_s(J)=\max_{\mathcal A}
\sum_{X\in\mathcal A}\bigl(s-|N_J(X)|\bigr),
\end{equation}
where $\mathcal A$ ranges over half-disjoint families of $s$-deficient
fragments. Similarly, let
\begin{equation}\label{eq:t-definition}
t_s(J)=\max_{\mathcal X}
\sum_{X\in\mathcal X}\bigl(s-|N_J(X)|\bigr),
\end{equation}
where $\mathcal X$ ranges over pairwise disjoint families of $s$-deficient
fragments. The empty family is allowed in both definitions. Since every
pairwise disjoint family of fragments is half-disjoint, $t_s(J)\le t'_s(J)$.

Let $a_s(J)$ be the minimum number of edges that must be added to $J$
to make it $s$-connected. Thus $a_s(J)$ is the minimum size of a set
$A\subseteq E(K(W))\setminus E(J)$ for which the graph $J+A$, with edge set
$E(J)\cup A$, is $s$-connected. If $K\subseteq W$ has order $s-1$ and $J-K$ has at
least three components, then $K$ is an {\bf $(s-1)$-shredder}. Let
\jbj{$b_J(K)$ denote} the number of components of $J-K$, and put
$\delta_J(K)=\max\{0,\max_{x\in K}(s-d_J(x))\}$. Define
$\widehat b_s(J)=\max_K(b_J(K)+\delta_J(K))$, where the maximum is over all
$(s-1)$-shredders; if there is no such shredder, set
$\widehat b_s(J)=0$.

Jackson and Jord\'an \cite[Theorem~7.7]{JacksonJordanAug} proved a min--max
theorem on the minimum number of edges needed to make a graph
$s$-connected. \jbj{We apply their theorem to disconnected graphs in
Section~\ref{sec:packing}.} In their notation, take the target connectivity
$k=s$ and the initial connectivity $\ell=0$. Since
$s\ge2$, the condition $\ell\le k-2$ holds. With the standard convention
that every nonempty graph is $0$-connected, this gives the following form
for disconnected graphs.

\begin{theorem}\label{thm:JJ-connectivity}
Let $s\ge2$ be an integer and put $C_s=10(s+2)^3(s+1)^3$. Every
disconnected graph $J$ of order at least $s+1$ with $a_s(J)\ge C_s$ satisfies
\begin{equation}\label{eq:JJ-connectivity}
a_s(J)=
\max\left\{
\widehat b_s(J)-1,
\left\lceil\frac{t_s(J)}{2}\right\rceil
\right\}.
\end{equation}
\end{theorem}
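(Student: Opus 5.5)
The plan is to obtain the statement as a direct specialization of Jackson and Jord\'an's min--max theorem \cite[Theorem~7.7]{JacksonJordanAug}, rather than to reprove it. Their theorem covers an $\ell$-connected graph $J$ with $\ell\le k-2$, target connectivity $k$, and $a_k(J)$ above an explicit constant depending on $k$ and $\ell$. For such $J$ it asserts that $a_k(J)$ equals the maximum of (maximal shredder value minus one) and $\lceil t_k(J)/2\rceil$. So the proof consists of checking that, with $k=s$ and $\ell=0$, every hypothesis and every quantity in their statement matches the ones defined above.

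The steps, in order, are as follows.

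First, I check that a disconnected graph $J$ of order at least $s+1$ is admissible with $\ell=0$. Under the standard convention every nonempty graph is $0$-connected, and $s\ge 2$ gives $\ell=0\le s-2$.

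Second, I compare the definitions. Fragments, $X^{*}$ and $s$-deficiency are as in their paper, and the quantity $t_s(J)$ is taken over pairwise disjoint families of deficient fragments. An $(s-1)$-shredder is a set $K$ of size $s-1$ such that $J-K$ has at least three components. The correction term $\delta_J(K)$ counts the degree deficiency of vertices of $K$, which is how their $\widehat b$ incorporates low-degree shredder vertices. I would be careful that their shredders are taken of size $k-1$ (not $\ell$), which is what is written here.

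Third, I evaluate their threshold constant at $\ell=0$ and check that it is at most $C_s=10(s+2)^3(s+1)^3$. Their bound is a polynomial in $k$ and $k-\ell$ that increases with $k-\ell$. Setting $k-\ell=s$ and bounding the resulting factors by $(s+2)^3(s+1)^3$ shows that $a_s(J)\ge C_s$ implies their hypothesis, and (\ref{eq:JJ-connectivity}) follows.

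The main obstacle is the case $\ell=0$ itself. Some of their intermediate lemmas may be phrased for connected graphs, or may implicitly use that a deficient fragment has a nonempty neighbourhood. For a disconnected $J$, components give fragments with $N_J(X)=\varnothing$ and deficiency $s$. Likewise, every set $K$ of size $s-1$ whose removal leaves at least three components counts as a shredder, including sets for which $J-K$ has components not attached to $K$.

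To handle this, I would verify that their proof only uses $\ell$-connectivity through the inequality $|N_J(X)|\ge\ell$, which is vacuous for $\ell=0$. I would also check that the empty-neighbourhood fragments are treated uniformly in their counting of $t$ and $b$. If this check failed, the fallback is to run their argument explicitly on $J$ with a preliminary step joining components. That step does not change $a_s(J)$, since the components must be joined anyway, and the fragment counts in $t_s(J)$ already record the required new edges.
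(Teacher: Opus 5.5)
Your proposal matches the paper. The paper gives no independent argument for this statement: it specializes Jackson and Jord\'an's Theorem~7.7 in \cite{JacksonJordanAug} to target connectivity $k=s$ and initial connectivity $\ell=0$. It notes that $\ell\le k-2$ because $s\ge2$, and that every nonempty graph is $0$-connected by convention, which is exactly your main line.

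Your fallback of first joining the components is not needed, and it would not be sound as described. The joining edges need not belong to any optimal augmenting set, so the count $a_s(J)$ is not preserved. The added edges also change $t_s(J)$ and $\widehat b_s(J)$.
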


\section{\jbj{Optimal} mixed rigidity packings}\label{sec:packing}

In this section, we prove Theorems~\ref{thm:rank-defect}
and~\ref{thm:main} and some corollaries. We use information about small vertex separators to
bound the number of missing independent rigidity constraints. We first show
that if the edges of $F$ are added to $H$ so that $H\cup F$ is
$s$-connected, then deficient fragments of $H$ force many edges of $F$
across the corresponding separations. To connect this separator count with
rigidity, we take many disjoint labeled copies of the same graph. Any
\jbj{set of}
edges whose addition makes their union $d(d+1)$-connected also makes the
resulting graph $d$-rigid by Theorem~\ref{thm:villanyi-rigidity}.
\bhz{This gives a lower bound, in terms of rigidity rank, on the minimum
number of edges needed to obtain a $d(d+1)$-connected graph.
Theorem~\ref{thm:JJ-connectivity} provides an upper bound for the same
minimum in terms of deficient fragments.}{}

The rank-deficiency estimate is first proved for a single prescribed
dimension $d\ge2$; the case $d=1$ has a direct graph-theoretic proof. A
scaling inequality for deficient fragments then allows us to combine the
estimates for arbitrary dimensions $d_1,\ldots,d_s$. Their coefficients add
to one, which yields Theorem~\ref{thm:rank-defect}. We finally apply
Edmonds' matroid union theorem once to obtain Theorem~\ref{thm:main}.

\subsection{Edge bounds}\label{subsec:edge-bound}

The following lemma uses the counting argument of Jackson and Jord\'an
\cite[p.~36]{JacksonJordanAug} for a lower bound on the number of edges
needed to make a graph $s$-connected.

\begin{lemma}\label{lem:fragment-edge-bound}
Let $s\ge2$ be an integer, and let $H=(V,E_H)$ and $F=(V,E_F)$ be
edge-disjoint graphs on the same vertex set. If $H\cup F$ is $s$-connected,
then
\[
t'_s(H)\le 2|E_F|.
\]
\end{lemma}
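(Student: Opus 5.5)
Fix a half-disjoint family $\mathcal A$ of $s$-deficient fragments of $H$ attaining $t'_s(H)$; we count edges of $F$ that cross the separations $(X,N_H(X),X^{*})$ for $X\in\mathcal A$. The argument is the standard Jackson--Jord\'an count: each fragment needs at least $s-|N_H(X)|$ edges of $F$ leaving it, and half-disjointness means that each edge of $F$ is charged at most twice.

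First, fix $X\in\mathcal A$. In $H$ there are no edges between $X$ and $X^{*}$. Let $F_X$ be the set of edges of $F$ joining $X$ to $X^{*}$. We claim $|F_X|\ge s-|N_H(X)|$.

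Suppose instead that $|F_X|<s-|N_H(X)|$. For each edge of $F_X$, choose one of its endpoints, and let $T$ be the set of chosen endpoints. Put $S=N_H(X)\cup T$, so $|S|\le |N_H(X)|+|F_X|<s$. Then $G=H\cup F$ has no edge from $X\setminus S$ to $X^{*}\setminus S$: any such edge avoids $N_H(X)$, is not an $H$-edge, and would lie in $F_X$, yet one of its ends lies in $S$.

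So $G-S$ is disconnected as soon as both $X\setminus S$ and $X^{*}\setminus S$ are nonempty. That contradicts the $s$-connectivity of $G$.

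The main obstacle is ensuring that both sides survive, because $T$ could swallow all of $X$ or all of $X^{*}$. We handle this by choosing endpoints more carefully.

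1. Suppose $|X|\le|F_X|$, or more generally that we cannot keep a vertex of $X$ by choosing endpoints on the $X^{*}$ side. Then we choose all endpoints in $X^{*}$. This deletes at most $|F_X|$ vertices of $X^{*}$.
2. If that deletion empties $X^{*}$, then $|X^{*}|\le|F_X|$. In that case we instead use the $X$-endpoints for all but one of the $F$-edges, and keep a chosen vertex on each side.

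A cleaner way to avoid this case analysis is to choose a vertex $x\in X$ and a vertex $y\in X^{*}$. For each edge of $F_X$ not incident with $x$, pick an endpoint different from $x$ and from $y$. Such an endpoint exists because the edge's two ends lie in $X$ and in $X^{*}$, and they cannot be $x$ and $y$ respectively unless the edge is $xy$.

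If $xy\in F_X$, we should have chosen $x,y$ non-adjacent in $F$. If every pair is adjacent, then $|F_X|\ge|X||X^{*}|$. Alternatively, we can argue directly that $N_H(X)\cup X$ separates nothing and use $|V(G)|\ge s+1$: $|F_X|\ge|X|\,|X^{*}|\ge|X|+|X^{*}|-1$, and $|X|+|X^{*}|=|V(G)|-|N_H(X)|\ge s+1-|N_H(X)|$, which gives $|F_X|\ge s-|N_H(X)|$ anyway.

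With $x,y$ non-adjacent, the separator $S$ of size less than $s$ separates $x$ from $y$, which is impossible. Hence $|F_X|\ge s-|N_H(X)|$ in every case.

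Second, we sum over $\mathcal A$. An edge $uv\in F$ lies in $F_X$ exactly when $X$ separates $\{u,v\}$. By half-disjointness, each edge lies in $F_X$ for at most two members $X\in\mathcal A$. Therefore
\[
t'_s(H)=\sum_{X\in\mathcal A}\bigl(s-|N_H(X)|\bigr)\le\sum_{X\in\mathcal A}|F_X|\le 2|E_F|.
\]
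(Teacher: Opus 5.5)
Your proof is correct. The second half, summing over a half-disjoint family with each $F$-edge charged at most twice, is exactly the paper's. The difference lies in how you prove the per-fragment bound $|E_F(X,X^{*})|\ge s-|N_H(X)|$.

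The paper uses the path form. It picks $x\in X$ and $y\in X^{*}$ and takes $s$ internally disjoint $x$--$y$ paths from Menger's theorem, counting the edge $xy$ as a path if present. It discards the at most $|N_H(X)|$ paths that meet $N_H(X)$. On each remaining path, the first edge leaving $X$ must land in $X^{*}$, so it is a distinct $F$-edge between $X$ and $X^{*}$.

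You argue in the cut form instead. If there were fewer crossing $F$-edges, take $N_H(X)$ together with one endpoint of each crossing edge, chosen to avoid a pair $x\in X$, $y\in X^{*}$ that is non-adjacent in $F$. This is a set of fewer than $s$ vertices separating $x$ from $y$, which is impossible. When no such non-adjacent pair exists, you count directly: $|X||X^{*}|\ge|X|+|X^{*}|-1\ge s-|N_H(X)|$, using $|V|\ge s+1$.

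Your route needs only the definition of $s$-connectivity rather than Menger's theorem. The price is an explicit case for $F$ being complete between $X$ and $X^{*}$. That is exactly the adjacent-endpoint case the paper absorbs into the form of Menger's theorem that counts $xy$ as one of the paths.

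For a clean write-up, make two changes:
\begin{enumerate}
\item Delete the tentative numbered case analysis. Items 1 and 2 are not a complete argument on their own and are superseded by the choice of $x,y$.
\item Replace ``for each edge of $F_X$ not incident with $x$'' by ``for each edge of $F_X$ other than $xy$''. Every crossing edge, including those at $x$, needs a chosen endpoint in $S$; otherwise $x$ could keep an $F$-neighbour in $X^{*}\setminus S$.
\end{enumerate}
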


\begin{proof}
Let $X$ be an $s$-deficient fragment of $H$, and choose $x\in X$ and
$y\in X^{*}$. The vertices $x$ and $y$ are distinct. Since $H\cup F$ is
$s$-connected, Menger's theorem gives $s$ internally vertex-disjoint
$x$--$y$ paths in $H\cup F$. If $x$ and $y$ are adjacent, the edge $xy$
itself is counted as one of these paths.

Neither endpoint belongs to $N_H(X)$. Since the paths are internally
vertex-disjoint, at most $|N_H(X)|$ of them contain a vertex of $N_H(X)$.
Consider one of the remaining paths, and traverse it from $x$ to $y$. Let
$uv$ be the first edge on this path with $u\in X$ and $v\notin X$. The path
avoids $N_H(X)$, so $v\in X^{*}$. By the definition of $X^{*}$, no edge of
$H$ joins $X$ and $X^{*}$; hence $uv\in E_F(X,X^{*})$. The edges obtained
from distinct paths are distinct. It follows that
\begin{equation}\label{eq:fragment-crossing}
|E_F(X,X^{*})|\ge s-|N_H(X)|.
\end{equation}

Let $\mathcal A$ be a half-disjoint family of $s$-deficient fragments of
$H$. Sum \eqref{eq:fragment-crossing} over $X\in\mathcal A$. An edge
$uv\in E_F$ is included in $E_F(X,X^{*})$ precisely when $X$ separates
$\{u,v\}$. By half-disjointness, this occurs for at most two members of
$\mathcal A$. Therefore
\[
\sum_{X\in\mathcal A}\bigl(s-|N_H(X)|\bigr)\le2|E_F|.
\]
Taking the maximum over all such families $\mathcal A$ proves the lemma.
\end{proof}

\begin{lemma}\label{lem:t-prime-scaling}
Let $b\ge a\ge2$ be integers, and let $H$ have order at least $b+1$. Then
\[
t'_b(H)\ge\frac{b}{a}t'_a(H).
\]
\end{lemma}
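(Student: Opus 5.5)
The plan is to take a half-disjoint family $\mathcal A$ of $a$-deficient fragments of $H$ that attains $t'_a(H)$, and to show that the same family works, unchanged, as a witness for $t'_b(H)$. First I check that $\mathcal A$ is admissible for $b$. Every $X\in\mathcal A$ is a fragment, meaning $X^*\neq\varnothing$; this condition does not depend on the target $s$. The requirement $|V(H)|\ge b+1$ in the definition also holds by hypothesis. Since $|N_H(X)|<a\le b$, each $X$ is $b$-deficient. Finally, half-disjointness is a property of the family alone and not of $s$. Therefore $\mathcal A$ is a half-disjoint family of $b$-deficient fragments, and
\[
t'_b(H)\ge\sum_{X\in\mathcal A}\bigl(b-|N_H(X)|\bigr).
\]

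The remaining step is a termwise comparison. For each $X\in\mathcal A$ put $m=|N_H(X)|$, so that $0\le m\le a-1$. I want
\[
b-m\ge\frac{b}{a}(a-m).
\]
Multiplying out, this is $ab-am\ge ab-bm$, which reduces to $bm\ge am$. That holds because $b\ge a$ and $m\ge0$. Summing the inequality over $\mathcal A$ gives
\[
\sum_{X\in\mathcal A}\bigl(b-|N_H(X)|\bigr)\ge\frac{b}{a}\sum_{X\in\mathcal A}\bigl(a-|N_H(X)|\bigr)=\frac{b}{a}\,t'_a(H).
\]
If $\mathcal A$ is empty, both sides are $0$, so the empty case needs no separate treatment.

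I expect no step to be a real obstacle. The only point needing care is confirming that being a fragment and being half-disjoint do not depend on the parameter $s$. This is why the order hypothesis is stated with $b$: it ensures $t'_b(H)$ is defined on the same vertex set $W=V(H)$.
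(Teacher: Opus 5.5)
Your proposal is correct and follows essentially the same argument as the paper. The paper likewise reuses a family attaining $t'_a(H)$ as a witness for $t'_b(H)$, noting that it stays admissible, and compares termwise via $b-|N_H(X)|-\frac{b}{a}\bigl(a-|N_H(X)|\bigr)=\frac{b-a}{a}|N_H(X)|\ge0$.
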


\begin{proof}
Let $\mathcal A$ be a half-disjoint family of $a$-deficient fragments of
$H$. Every member of $\mathcal A$ is also $b$-deficient, and the family
remains half-disjoint. For each $X\in\mathcal A$,
\[
b-|N_H(X)|-\frac ba\bigl(a-|N_H(X)|\bigr)
=\frac{b-a}{a}|N_H(X)|\ge0.
\]
Thus the total $b$-deficiency of $\mathcal A$ is at least $b/a$ times its
total $a$-deficiency. Taking a family that attains $t'_a(H)$ proves the
lemma.
\end{proof}

\subsection{Rank bounds}\label{subsec:rank-bound}

For an integer $d\ge2$, put $D=\binom{d+1}{2}$ and
$k_0=d(d+1)=2D$. If $H$ is a graph on $n\ge k_0+1$ vertices,
\jbj{let} $\Delta_d(H)=dn-D-r_d(H)$. Thus $\Delta_d(H)$ is the
number of independent rigidity constraints by which $H$ falls short of full
rank. In particular, it is zero exactly when $H$ is $d$-rigid, and adding
one edge can decrease it by at most one because one edge contributes only one
row to the rigidity matrix.

\begin{lemma}\label{lem:rank-bound}
Let $d\ge2$ be an integer, let $k_0=d(d+1)$, and let $H$ be a graph on
$n\ge k_0+1$ vertices. Then
\begin{equation}\label{eq:rank-bound}
t'_{k_0}(H)\ge2\Delta_d(H).
\end{equation}
\end{lemma}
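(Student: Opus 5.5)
The plan is to compare two estimates for the augmentation number $a_{k_0}$ of an auxiliary graph built from many disjoint labeled copies of $H$. A rigidity argument based on Theorem~\ref{thm:villanyi-rigidity} gives a lower bound, and Theorem~\ref{thm:JJ-connectivity} gives an upper bound. Fix a large integer $m$ and let $J_m$ be the disjoint union of $m$ labeled copies $H^{(1)},\ldots,H^{(m)}$ of $H$, on vertex set $W$ with $|W|=mn$. Since $m\ge2$, the graph $J_m$ is disconnected, and its order is at least $k_0+1$.

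\emph{Lower bound.} Let $A$ be any set of new edges for which $J_m+A$ is $k_0$-connected. By Theorem~\ref{thm:villanyi-rigidity}, $J_m+A$ is $d$-rigid, so $r_d(J_m+A)=dmn-D$ by \eqref{eq:complete-rank}. The $d$-dimensional rigidity rank is additive over vertex-disjoint graphs, because the rigidity matrix is block diagonal; hence $r_d(J_m)=m\,r_d(H)$. Each added edge raises the rank by at most one, so
\[
a_{k_0}(J_m)\ge dmn-D-m\,r_d(H)=m\Delta_d(H)+(m-1)D .
\]
The right-hand side tends to infinity with $m$. Therefore $a_{k_0}(J_m)\ge C_{k_0}$ for $m$ large, and Theorem~\ref{thm:JJ-connectivity} applies.

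\emph{Upper bound.} We need two estimates, each linear in $m$ with an additive error independent of $m$:
\[
t_{k_0}(J_m)\le m\,t'_{k_0}(H)+c_1,\qquad \widehat b_{k_0}(J_m)\le m\,c(H)+c_2 .
\]
Here $c(H)$ is the number of components of $H$.
\begin{itemize}
\item For the shredder estimate, let $K$ be a $(k_0-1)$-shredder. Copies of $H$ disjoint from $K$ contribute exactly $c(H)$ components each. The at most $k_0-1$ copies meeting $K$ contribute at most $n$ components each. Finally $\delta_{J_m}(K)\le k_0$. This gives $c_2=k_0n+k_0$.
\item For the fragment estimate, take a pairwise disjoint family $\mathcal X$ of $k_0$-deficient fragments of $J_m$.
  \begin{itemize}
  \item Fix $X\in\mathcal X$ and a copy $H^{(j)}$ in which $X_j=X\cap V(H^{(j)})$ and $X^*\cap V(H^{(j)})$ are both nonempty. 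Then $X_j$ is a fragment of $H^{(j)}$, and $N_{H^{(j)}}(X_j)\subseteq N_{J_m}(X)$. So the deficiency of $X$ is at most the deficiency of $X_j$.
  \item The sets $X_j$ obtained inside a single copy are pairwise disjoint, hence half-disjoint. Summing over all copies therefore bounds the contribution of these fragments by $m\,t'_{k_0}(H)$.
  \item A fragment $X$ has no such copy only if every copy meeting $X$ is contained in $X\cup N_{J_m}(X)$. Since $X^*\ne\varnothing$, some copy then avoids $X$. Because the members of $\mathcal X$ are disjoint and each contributes at most $k_0$, I expect that only a bounded number of such exceptional fragments can carry positive deficiency. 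This would give a constant $c_1$, depending on $n$ and $k_0$, for the exceptional part.
  \end{itemize}
\end{itemize}
Making this exceptional-fragment bookkeeping precise is the step I expect to be the main obstacle. If a direct count fails, I would first try to show that whole copies can be placed inside the fragments without decreasing the deficiency.

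\emph{Conclusion.} Combining the two bounds through \eqref{eq:JJ-connectivity} gives two cases.
\begin{itemize}
\item If the maximum in \eqref{eq:JJ-connectivity} is $\lceil t_{k_0}(J_m)/2\rceil$, then
\[
m\Delta_d(H)+(m-1)D\le \tfrac12\bigl(m\,t'_{k_0}(H)+c_1\bigr)+1 .
\]
Dividing by $m$ and letting $m\to\infty$ gives $\Delta_d(H)+D\le t'_{k_0}(H)/2$, which is stronger than \eqref{eq:rank-bound}.
\item Otherwise the maximum is $\widehat b_{k_0}(J_m)-1$, and the same limit gives $\Delta_d(H)+D\le c(H)$. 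Then $c(H)\ge2$, so the components of $H$ form a disjoint family of fragments, each with empty neighbourhood and deficiency $k_0$. Hence $t'_{k_0}(H)\ge k_0c(H)\ge 2c(H)\ge2\Delta_d(H)$.
\end{itemize}
Since one of the two cases occurs for infinitely many $m$, \eqref{eq:rank-bound} holds in either case.
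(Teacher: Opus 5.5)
\textbf{Gap: the fragment estimate fails.} Your overall strategy is the right one: disjoint copies, the Vill\'anyi lower bound on $a_{k_0}(J_m)$, the Jackson--Jord\'an upper bound, and the shredder estimate. The problem is the fragment estimate $t_{k_0}(J_m)\le m\,t'_{k_0}(H)+c_1$ with $c_1$ independent of $m$. It is false, and your case~1 rests on it.

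The exceptional fragments are not bounded in number. The vertex set of each copy $H^{(j)}$ is a fragment of $J_m$ with empty neighbourhood. So the $m$ copies form a pairwise disjoint family of $k_0$-deficient fragments, and $t_{k_0}(J_m)\ge mk_0$ for every $H$. If $H$ is $k_0$-connected (e.g.\ $H=K_n$), then $t'_{k_0}(H)=0$, and your estimate fails for large $m$. Your fallback of placing whole copies inside fragments cannot produce a constant $c_1$ either, since the lower bound $mk_0$ holds regardless.

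Your derived conclusion $\Delta_d(H)+D\le t'_{k_0}(H)/2$ should have been a warning sign. For such $H$ we have $\Delta_d(H)=0=t'_{k_0}(H)$, so it would give $D\le0$.

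\textbf{How to repair it.} What is true is $t_{k_0}(J_m)\le m\bigl(k_0+t'_{k_0}(H)\bigr)$.
\begin{enumerate}
\item Charge every $X\in\mathcal X$, exceptional or not, to one nonempty piece $X_j$. This is valid because $N_{H^{(j)}}(X_j)\subseteq N_{J_m}(X)$.
\item Within one copy, the charged pieces are pairwise disjoint. Those that are fragments of $H$ contribute at most $t'_{k_0}(H)$.
\item The remaining pieces $Z_1,\ldots,Z_r$ satisfy $N_H(Z_i)=V(H)\setminus Z_i$. Their deficiencies therefore sum to $r(k_0-n)+\sum_i|Z_i|\le r(k_0-n)+n\le k_0$, using $n\ge k_0+1$.
\end{enumerate}

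With this bound, case~1 reads $m\Delta_d(H)+(m-1)D\le mD+\tfrac m2\,t'_{k_0}(H)+1$. The linear term $mk_0/2$ cancels the $mD$ coming from the rigidity lower bound precisely because $k_0=d(d+1)=2D$. What remains is $\Delta_d(H)\le t'_{k_0}(H)/2+(D+1)/m$, which gives exactly \eqref{eq:rank-bound} in the limit and nothing stronger. This exact cancellation is the heart of the argument, and your proposal misses it.

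Your case~2 argument is correct as written. The paper handles it differently: it notes $c(H)\le\bigl(k_0+t'_{k_0}(H)\bigr)/2$, so both terms of the maximum in \eqref{eq:JJ-connectivity} are bounded by $\tfrac m2\bigl(k_0+t'_{k_0}(H)\bigr)+O(1)$ simultaneously, with no case split.
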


\begin{proof}
Fix $H$, and write $\tau=t'_{k_0}(H)$ and $\Delta=\Delta_d(H)$. For an
integer $L\ge2$, let $J_L$ be the disjoint union of $L$ labeled copies of
$H$, with vertex sets $V_1,\ldots,V_L$.

We first obtain a lower bound for $a_{k_0}(J_L)$. Choose a set $A$ of
$a_{k_0}(J_L)$ new edges such that $J_L+A$ is $k_0$-connected. Then
$J_L+A$ is $d$-rigid by Theorem~\ref{thm:villanyi-rigidity}. Since $J_L+A$
is $d$-rigid and has $Ln$ vertices, its rigidity rank is
$r_d(J_L+A)=dLn-D$.

We next compute the rank before the edges of $A$ are added. Order the columns
of the rigidity matrix according to $V_1,\ldots,V_L$. Since no edge joins
distinct copies, the matrix is block diagonal, with one block for each copy
of $H$. Equivalently, the constraints from different copies involve disjoint
sets of vertex coordinates. Each block has rank $r_d(H)$, and hence
$r_d(J_L)=Lr_d(H)$. Each edge of $A$ adds one row to the rigidity matrix and
can increase its rank by at most one. Therefore
$dLn-D=r_d(J_L+A)\le Lr_d(H)+|A|$, which gives
\begin{equation}\label{eq:a-lower}
a_{k_0}(J_L)\ge dLn-D-Lr_d(H)=L(D+\Delta)-D.
\end{equation}
Since $r_d(H)\le dn-D$, we have $\Delta\ge0$.  As $D>0$,
\eqref{eq:a-lower} shows that $a_{k_0}(J_L)$ tends to infinity with $L$. \jbj{In particular, we can choose $L$ so that $a_{k_0}(J_L)$ is at least $C_{k_0}=10(k_0+2)^3(k_0+1)^3$ (the value from Theorem \ref{thm:JJ-connectivity}).} 

We next estimate the term involving $t_{k_0}(J_L)$ in
Theorem~\ref{thm:JJ-connectivity}.

\begin{claim}\label{clm:t-bound}
For every integer $L\ge2$,
\begin{equation}\label{eq:disjoint-fragment-bound}
t_{k_0}(J_L)\le L(k_0+\tau).
\end{equation}
\end{claim}

\begin{proof}
Let $\mathcal X$ be a pairwise disjoint family of $k_0$-deficient fragments
of $J_L$. For $X\in\mathcal X$ and $i\in\{1,\ldots,L\}$, put
$Y_i(X)=X\cap V_i$, viewed as a subset of the corresponding copy of $H$, and
let $I_X=\{i:Y_i(X)\ne\varnothing\}$. Since $X$ is nonempty, $I_X$ is
nonempty. There are no edges between distinct copies of $H$, so
\begin{equation}\label{eq:neighborhood-sum}
|N_{J_L}(X)|=\sum_{i\in I_X}|N_H(Y_i(X))|.
\end{equation}
Since $X$ is $k_0$-deficient, the nonnegative summands in
\eqref{eq:neighborhood-sum} have sum smaller than $k_0$; hence every summand
is smaller than $k_0$. Furthermore, the difference between the right- and
left-hand sides below is $(|I_X|-1)k_0\ge0$, and hence
\begin{equation}\label{eq:deficiency-split}
k_0-|N_{J_L}(X)|
\le\sum_{i\in I_X}\bigl(k_0-|N_H(Y_i(X))|\bigr).
\end{equation}

Fix $i\in\{1,\ldots,L\}$. The nonempty sets $Y_i(X)$, where
$X\in\mathcal X$, are pairwise disjoint. Those satisfying
$V_i\setminus(Y_i(X)\cup N_H(Y_i(X)))\ne\varnothing$ are
$k_0$-deficient fragments of $H$. They form a pairwise disjoint, and hence
half-disjoint, family, so the sum of their deficiencies is at most $\tau$.

Consider the remaining nonempty sets. If there are none, their total
contribution to the right-hand side of \eqref{eq:deficiency-split} is zero.
Otherwise, denote them by $Z_1,\ldots,Z_r$, where $r\ge1$. For each $j$, we
have $V(H)\setminus(Z_j\cup N_H(Z_j))=\varnothing$, so
$N_H(Z_j)=V(H)\setminus Z_j$; equivalently, every vertex outside $Z_j$ has a
neighbor in $Z_j$. The sets $Z_1,\ldots,Z_r$ are pairwise disjoint, and
$n\ge k_0+1$. It follows that
\begin{align*}
\sum_{j=1}^{r}\bigl(k_0-|N_H(Z_j)|\bigr)
=r(k_0-n)+\sum_{j=1}^{r}|Z_j|
\le r(k_0-n)+n
=k_0+(r-1)(k_0-n)\le k_0.
\end{align*}
Thus, for each fixed $i$, the sum of the terms on the right-hand side of
\eqref{eq:deficiency-split} is at most $k_0+\tau$. Summing first over
$X\in\mathcal X$ and then over $i\in\{1,\ldots,L\}$ gives the inequality in
Claim~\ref{clm:t-bound} for the family $\mathcal X$. Taking the maximum over
all pairwise disjoint families $\mathcal X$ proves the claim.
\end{proof}

Let $c$ be the number of connected components of $H$. We have
\begin{equation}\label{eq:component-bound}
c\le\frac{k_0+\tau}{2}.
\end{equation}
To verify this, first suppose that $c=1$. Since $k_0\ge2$ and $\tau\ge0$,
the right-hand side of \eqref{eq:component-bound} is at least one. If
$c\ge2$, the vertex sets of the components of $H$ form a pairwise disjoint,
and hence half-disjoint, family of $k_0$-deficient fragments. Each member has
neighborhood of size zero, so $\tau\ge ck_0\ge2c$, which again gives
\eqref{eq:component-bound}.

\begin{claim}\label{clm:shredder-bound}
For every integer $L\ge2$,
\begin{equation}\label{eq:shredder-bound}
\widehat b_{k_0}(J_L)-1
\le\frac{L}{2}(k_0+\tau)+(k_0-1)n+k_0-1.
\end{equation}
\end{claim}

\begin{proof}
If $J_L$ has no $(k_0-1)$-shredder, then
$\widehat b_{k_0}(J_L)=0$, and the inequality holds. Otherwise, let $K$ be
an arbitrary $(k_0-1)$-shredder of $J_L$, and let $q$ be the number of
copies of $H$ that meet $K$. Since $|K|=k_0-1$, we have $q\le k_0-1$.
Each of the $L-q$ copies disjoint from $K$ contributes $c$ components to
$J_L-K$, whereas each of the other $q$ copies contributes at most $n$
components. Hence
\[
b_{J_L}(K)\le(L-q)c+qn\le Lc+(k_0-1)n.
\]
The definition of $\delta_{J_L}(K)$ gives $\delta_{J_L}(K)\le k_0$.
Combining these two inequalities with \eqref{eq:component-bound}, and then
taking the maximum over all $(k_0-1)$-shredders $K$, proves the claim.
\end{proof}

By Claim~\ref{clm:t-bound},
$\lceil t_{k_0}(J_L)/2\rceil\le L(k_0+\tau)/2+1$.
Let $B=\max\{1,(k_0-1)n+k_0-1\}$. This constant depends only on $k_0$ and
$n$, and is independent of $L$. For all sufficiently large $L$, inequality
\eqref{eq:a-lower} gives
$a_{k_0}(J_L)\ge C_{k_0}=10(k_0+2)^3(k_0+1)^3$.
Since $k_0\ge2$ and $L\ge2$, the graph $J_L$ is disconnected and has
$Ln\ge k_0+1$ vertices. All hypotheses of
Theorem~\ref{thm:JJ-connectivity} hold. Applying it together with Claim~\ref{clm:shredder-bound} and the preceding inequality, we obtain
\begin{equation}\label{eq:a-upper}
a_{k_0}(J_L)\le\frac{L}{2}(k_0+\tau)+B.
\end{equation}
Combining \eqref{eq:a-lower} and \eqref{eq:a-upper}, and using $k_0=2D$,
we have
\[
L(D+\Delta)-D
\le a_{k_0}(J_L)
\le LD+\frac{L}{2}\tau+B.
\]
After cancelling $LD$ and dividing by $L$, this gives
$\Delta\le\tau/2+(B+D)/L$. Letting $L$ tend to infinity yields
$t'_{k_0}(H)=\tau\ge2\Delta$, as required.
\end{proof}

The one-dimensional case has a direct graph-theoretic form. Recall that
$r_1(H)=n-c(H)$, where $c(H)$ is the number of connected components of $H$.

\begin{lemma}\label{lem:rank-bound-one}
Let $H$ be a graph on $n\ge3$ vertices and put
$\Delta_1(H)=n-1-r_1(H)$. Then
\[
t'_2(H)\ge2\Delta_1(H).
\]
\end{lemma}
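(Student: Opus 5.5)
Since $r_1(H)=n-c$, where $c$ is the number of connected components of $H$, we have $\Delta_1(H)=c-1$. So it suffices to exhibit a half-disjoint family of $2$-deficient fragments of $H$ whose total $2$-deficiency is at least $2(c-1)$. This avoids any use of Theorem~\ref{thm:JJ-connectivity}.

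If $c=1$, then $\Delta_1(H)=0$. The empty family is allowed, so $t'_2(H)\ge0$ and there is nothing to prove.

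If $c\ge2$, let $C_1,\ldots,C_c$ be the vertex sets of the components of $H$. Each $C_j$ is nonempty and satisfies $N_H(C_j)=\varnothing$. Since $c\ge2$, the set $C_j^{*}=V\setminus C_j$ is nonempty, so $C_j$ is a fragment. It is $2$-deficient with deficiency $2-0=2$. The sets $C_j$ are pairwise disjoint, and a pairwise disjoint family is half-disjoint. Indeed, a pair $\{u,v\}$ is separated by $C_j$ only if $u\in C_j$ or $v\in C_j$, which happens for at most two indices $j$. Hence
\[
t'_2(H)\ge\sum_{j=1}^{c}2=2c\ge2(c-1)=2\Delta_1(H).
\]

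There is no real obstacle; the only points to check are the fragment condition $C_j^{*}\ne\varnothing$, which needs $c\ge2$, and the trivial case $c=1$. The hypothesis $n\ge3$ is not used in this argument; it only ensures that $t'_2$ is defined (order at least $s+1=3$).
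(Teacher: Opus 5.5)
Your proof is correct and follows essentially the same route as the paper's. Both arguments use $\Delta_1(H)=c-1$, handle the connected case trivially, and take the $c\ge2$ component vertex sets as a pairwise disjoint (hence half-disjoint) family of $2$-deficient fragments with empty neighborhoods, which gives $t'_2(H)\ge 2c\ge 2(c-1)$; you also check explicitly that $C_j^{*}\neq\varnothing$ and that disjointness implies half-disjointness, which the paper leaves implicit.
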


\begin{proof}
If $H$ is connected, then $\Delta_1(H)=0$ and there is nothing to prove.
Suppose that $H$ has $c\ge2$ connected components. Their vertex sets form a
pairwise disjoint, and hence half-disjoint, family of $2$-deficient
fragments. Each has neighborhood of size zero. Therefore
\[
t'_2(H)\ge2c\ge2(c-1)=2\Delta_1(H).
\]
\end{proof}

\subsection{\bhz{Proofs of packing theorems and corollaries}{}}
\label{subsec:proof-rank-packing}

We now combine the preceding estimates for several dimensions.

\rankdefectthm*
Equivalently,
\[
\sum_{i=1}^{s}\bigl(d_i n-D_i-r_{d_i}(H)\bigr)\le |E_F|.
\]
\begin{proof}
For each $i\in\{1,\ldots,s\}$, put $k_i=d_i(d_i+1)$ and
$\Delta_i=d_i n-D_i-r_{d_i}(H)$.
Since $H\cup F$ is $K$-connected, we have $n\ge K+1$. By
Lemma~\ref{lem:fragment-edge-bound},
\begin{equation}\label{eq:mixed-fragment-upper}
t'_K(H)\le2|E_F|.
\end{equation}
Fix $i$. Lemma~\ref{lem:t-prime-scaling}, with $a=k_i$ and $b=K$, gives
\[
t'_K(H)\ge\frac{K}{k_i}t'_{k_i}(H).
\]
If $d_i\ge2$, Lemma~\ref{lem:rank-bound} gives
$t'_{k_i}(H)\ge2\Delta_i$; if $d_i=1$, the same inequality follows from
Lemma~\ref{lem:rank-bound-one}. Hence \eqref{eq:mixed-fragment-upper} yields
\[
\Delta_i\le\frac{k_i}{K}|E_F|.
\]
Summing over $i$ and using $\sum_i k_i=K$, we obtain
\[
\sum_{i=1}^{s}\Delta_i\le |E_F|.
\]
This is equivalent to \eqref{eq:rank-defect}.
\end{proof}

The inequality in Theorem~\ref{thm:rank-defect} is also necessary for any such
packing. Indeed, if an $n$-vertex graph $Q=(V,E)$ with $n \ge \max_{i} d_i$ contains pairwise edge-disjoint
$d_i$-rigid spanning subgraphs, choose pairwise disjoint bases $B_i$ of
$\Rmat_{d_i}(K(V))$ and put $\rho_i=d_i n-D_i$. For every partition
$E=E_H\dotunion E_F$, with $H=(V,E_H)$,
\[
\sum_{i=1}^{s}\rho_i
=\sum_{i=1}^{s}|B_i\cap E_H|+\sum_{i=1}^{s}|B_i\cap E_F|
\le\sum_{i=1}^{s}r_{d_i}(H)+|E_F|.
\]
Thus Theorem~\ref{thm:rank-defect} provides precisely the matroid-union
inequality needed under the connectivity hypothesis.

\mainthm*
\begin{proof}
Let $G=(V,E)$ be a $K$-connected graph, let $n=|V|$, and put
$D_i=\binom{d_i+1}{2}$ and $\rho_i=d_i n-D_i$. Since $n\ge K+1>d_i$ for
every $i$, $\rho_i$ is the full rank of the $d_i$-dimensional rigidity
matroid on $V$.

Apply \jbj{Theorem}~\ref{thm:edmonds} to the matroids
$\Rmat_{d_1}(G),\ldots,\Rmat_{d_s}(G)$. Fix $X\subseteq E$, and put
$H=(V,X)$ and $F=(V,E\setminus X)$. Since $H\cup F=G$,
Theorem~\ref{thm:rank-defect} gives
\begin{equation}\label{eq:union-condition}
|E\setminus X|+\sum_{i=1}^{s}r_{d_i}(X)
\ge\sum_{i=1}^{s}\rho_i.
\end{equation}
As this holds for every $X\subseteq E$, \jbj{Theorem}~\ref{thm:edmonds}
implies that at least $\sum_i\rho_i$ edges of $G$ can be selected and
partitioned into pairwise disjoint sets $I_1,\ldots,I_s$, where $I_i$ is
independent in $\Rmat_{d_i}(G)$. Since $|I_i|\le\rho_i$ for every $i$, no
such selection can contain more than $\sum_i\rho_i$ edges. Hence the equality
holds, and therefore $|I_i|=\rho_i$ for every $i$.

Thus each $I_i$ is a base of $\Rmat_{d_i}(K(V))$, and the spanning graph
$(V,I_i)$ is $d_i$-rigid. The sets $I_1,\ldots,I_s$ are pairwise disjoint,
which proves the sufficient condition.

If $K\ge4$, Lemma~\ref{lem:lower-bound-construction}, applied directly to
$d_1,\ldots,d_s$, gives infinitely many $(K-1)$-connected graphs with no
such packing. Hence the bound is sharp in every nontrivial case. If $K=2$,
then $s=1$ and $d_1=1$, and the assertion itself is immediate from ordinary
connectivity.
\end{proof}

\begin{proof}[Proof of Corollary~\ref{cor:equal-packings}]
For part~\textup{(a)}, apply Theorem~\ref{thm:main} with $s=t$ and
$d_1=\cdots=d_t=d$. Sharpness follows from
Lemma~\ref{lem:lower-bound-construction}.

For part~\textup{(b)}, suppose first that $k\ge2$ and apply
part~\textup{(a)} with $d=k$. The graph has at least
$tk(k+1)+1\ge k+1$ vertices, so every resulting $k$-rigid spanning
subgraph is $k$-connected by Lemma~\ref{lem:standard}. If $k=1$, then the
graph is $2t$-edge-connected, and the spanning-tree packing theorem of
Nash-Williams and Tutte \cite{NashWilliams,Tutte} gives $t$ pairwise
edge-disjoint spanning trees.
\end{proof}

\begin{proof}[Proof of Corollary~\ref{cor:tree-removal}]
For part~\textup{(a)}, apply Theorem~\ref{thm:main} with the dimension
sequence $d,1,\ldots,1$, where $1$ occurs $r$ times. This gives a
$d$-rigid spanning subgraph $H$ and pairwise edge-disjoint connected
spanning subgraphs $Q_1,\ldots,Q_r$, all of which are edge-disjoint from
$H$. Choose a spanning tree $T_i$ of each $Q_i$. Sharpness follows from
Lemma~\ref{lem:lower-bound-construction} applied to the same dimension
sequence.

For part~\textup{(b)}, suppose first that $k\ge2$ and apply
part~\textup{(a)} with $d=k$. After the edges of
$T_1,\ldots,T_r$ are deleted, the remaining graph contains the
$k$-rigid spanning subgraph $H$, which is $k$-connected by
Lemma~\ref{lem:standard}. If $k=1$, then $G$ is
$2(r+1)$-edge-connected, so Theorem~\ref{thm:tree-packing} gives
$r+1$ pairwise edge-disjoint spanning trees. Deleting any $r$ of them
leaves the remaining spanning tree.
\end{proof}

\section{A general bound for highly connected orientations}\label{sec:explicit-orientation}

For a digraph, a {\bf strong component} is a maximal vertex set inducing a
strongly connected subdigraph. Its {\bf condensation} is obtained by
contracting the strong components; a source component has in-degree zero and
a sink component has out-degree zero in the condensation. A vertex set is \jbj{{\bf independent}} if no two of its vertices are adjacent.

The proof uses two edge-disjoint minimally $(2q-2)$-rigid spanning subgraphs
and a common independent set $C$ of size $q+1$. Prescribed in-degrees and
out-degrees force every source and sink strong component remaining after the
deletion of fewer than $q$ vertices to meet $C$. The unused edges then
provide directed paths that place the surviving vertices of $C$ in a single
strong component.

\subsection{An independent-set estimate}

As usual, $\alpha(F)$ denotes the maximum size of an independent set in $F$. \jbj{The following consequence of Tur\'an's theorem will provide the common
set $C$.}

\begin{lemma}\label{lem:independent}
Let $q\ge2$, put $d=2q-2$ and $D=\binom{d+1}{2}$, and let $F$ be an
$n$-vertex graph with $n\ge8q(q-1)+1$. If $|E(F)|\le 2dn-2D$, then
$\alpha(F)\ge q+1$.
\end{lemma}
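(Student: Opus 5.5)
We apply Tur\'an's theorem in its Caro--Wei form $\alpha(F)\ge n/(\bar d+1)$, where $\bar d=2|E(F)|/n$ is the average degree, and show that the right-hand side exceeds $q$. This gives $\alpha(F)\ge q+1$, since $\alpha(F)$ is an integer.

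First we bound the edges crudely. Since $D>0$, the hypothesis gives $|E(F)|\le 2dn-2D<2dn$. Hence $\bar d<4d=8(q-1)$ and $\bar d+1<8q-7$. It therefore suffices to have $n\ge q(8q-7)$, because then
\[
\alpha(F)\ge\frac{n}{\bar d+1}>\frac{q(8q-7)}{8q-7}=q.
\]

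Next we compare this with the hypothesis on $n$. We have $q(8q-7)=8q^2-7q$, while $8q(q-1)+1=8q^2-8q+1$. So $8q^2-8q+1\ge 8q^2-7q$ holds only when $q\le1$. The crude estimate therefore falls short by about $q$, and the main obstacle is to use the subtracted term $2D$.

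To close this gap we keep $D$ exactly. Here $2D=d(d+1)=(2q-2)(2q-1)$. The average degree is $\bar d=2|E(F)|/n\le 4d-4D/n$, so
\[
\bar d+1\le 8q-7-\frac{2(2q-2)(2q-1)}{n}.
\]
We need $n>q(\bar d+1)$, which is implied by
\[
n>q(8q-7)-\frac{2q(2q-2)(2q-1)}{n}.
\]
Multiplying by $n$, this is the quadratic inequality
\[
n^2-q(8q-7)n+2q(2q-2)(2q-1)>0.
\]

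The remaining step is to verify this quadratic inequality for every $n\ge 8q^2-8q+1$, treating the lower endpoint with care. At $n=8q^2-8q+1$ we have $n-q(8q-7)=1-q$, so the left-hand side equals $n(1-q)+4q(q-1)(2q-1)=(q-1)\bigl(4q(2q-1)-n\bigr)$. This expression involves $8q^2-4q-8q^2+8q-1=4q-1>0$, so it is positive.

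For larger $n$ we need monotonicity. The quadratic is increasing for $n\ge q(8q-7)/2$, and our range lies beyond this point, so positivity at the lower endpoint gives positivity throughout. This yields $\alpha(F)>q$, and so $\alpha(F)\ge q+1$. The only delicate point is the arithmetic at the endpoint, where the term $2D$ is exactly what makes the bound $n\ge8q(q-1)+1$ suffice.
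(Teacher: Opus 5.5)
Your proposal is correct and is essentially the paper's argument. Both use the Tur\'an/Caro--Wei bound with $2|E(F)|+n\le(8q-7)n-4(q-1)(2q-1)$, which reduces the claim to positivity of $\phi(n)=n^2-q(8q-7)n+4q(q-1)(2q-1)$; this equals $(q-1)(4q-1)>0$ at $n=8q(q-1)+1$. The only cosmetic difference is that you get monotonicity because the vertex $q(8q-7)/2$ lies below $8q(q-1)+1$ (immediate for $q\ge2$), whereas the paper checks $\phi(x+1)-\phi(x)\ge 8q^2-9q+3>0$.
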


\begin{proof}
Put $m=|E(F)|$ and $n_0=8q(q-1)+1$. The standard bound obtained from
Tur\'an's theorem gives
\[
\alpha(F)\ge \frac{n^2}{2m+n}
\ge \frac{n^2}{(8q-7)n-4(q-1)(2q-1)}.
\]
To compare the last expression with $q$, define
$\phi(x)=x^2-q\bigl((8q-7)x-4(q-1)(2q-1)\bigr)$.
A direct calculation gives
$\phi(n_0)=(q-1)(4q-1)>0$. Moreover, for every integer $x\ge n_0$,
\[
\phi(x+1)-\phi(x)=2x+1-q(8q-7)
\ge 8q^2-9q+3>0.
\]
Hence $\phi(n)>0$, and the displayed lower bound for $\alpha(F)$ is greater
than $q$. Since $\alpha(F)$ is an integer, $\alpha(F)\ge q+1$.
\end{proof}

\subsection{Orientations with prescribed degrees}
The independence of $C$ allows the degrees prescribed on $C$ to be smaller
than those prescribed at the other vertices. For an orientation $D$ and a
vertex $v$, write $d_D^-(v)$ and $d_D^+(v)$ for the in-degree and out-degree
of $v$, respectively.

\begin{lemma}\label{lem:independent-prescribed-degrees}
Let $q\ge3$, put $d=2q-2$, and let $J$ be a minimally $d$-rigid graph on at least $d+1$ vertices. Suppose that
$C=\{c_0,c_1,\ldots,c_q\}$ is independent in $J$. Define
$g:V(J)\to\mathbb Z_{\ge0}$ by setting $g(c_0)=0$,
$g(c_1)=g(c_2)=g(c_3)=2$, $g(c_i)=3$ for $4\le i\le q$, and
$g(v)=d$ for $v\notin C$.
Then $J$ has \jbj{orientations $O_1$ and $O_2$ with the following properties}.
\begin{enumerate}[label=\textup{(\roman*)}]
\item For every $v\in V(J)$, $d^-_{O_1}(v)=g(v)$ and
$d^+_{O_2}(v)=g(v)$.
\item Let $S\subseteq V(J)$ with $|S|\le q-1$. Every nonempty set
$X\subseteq V(J)\setminus S$ that receives no arc in $O_1$ from
$V(J)\setminus(X\cup S)$ satisfies $X\cap C\ne\varnothing$. Similarly,
every nonempty set $Y\subseteq V(J)\setminus S$ that sends no arc in $O_2$
to $V(J)\setminus(Y\cup S)$ satisfies $Y\cap C\ne\varnothing$.
\end{enumerate}
\end{lemma}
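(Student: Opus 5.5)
The plan is to obtain (i) from Hakimi's criterion (Theorem~\ref{thm:hakimi}) and (ii) from a simple edge count, both resting on the sparsity of minimally $d$-rigid graphs. Since $E(J)$ is a base of $\Rmat_d(K(V(J)))$, and hence independent, every vertex set $U$ of $J$ satisfies
\[
|E(J[U])|\le d|U|-D \ \text{ if } |U|\ge d, \qquad |E(J[U])|\le \tbinom{|U|}{2} \ \text{ always},
\]
where $D=\binom{d+1}{2}=(q-1)(2q-1)$. This is because the rank of $\Rmat_d(K(U))$ is $d|U|-D$ when $|U|\ge d$, by \eqref{eq:complete-rank}. In addition $|E(J)|=dn-D$, where $n=|V(J)|$.

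For (i), I first check that $g(V)=|E(J)|$. For each $c\in C$ put $\epsilon(c)=d-g(c)\ge 0$. Then
\[
\sum_{c\in C}\epsilon(c)=d+3(d-2)+(q-3)(d-3)=d(q+1)-3q+3=(q-1)(2q-1)=D,
\]
so $g(V)=dn-D=|E(J)|$. Next, fix $X\subseteq V(J)$ and write $a=|X\setminus C|$ and $b=|X\cap C|$.

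If $|X|\ge d$, sparsity gives $|E(J[X])|\le d|X|-D\le d|X|-\sum_{c\in X\cap C}\epsilon(c)=g(X)$.

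If $|X|<d$, independence of $C$ means every edge of $J[X]$ has an end in $X\setminus C$. Hence
\[
|E(J[X])|\le \tbinom a2+ab\le a(a+b-1)\le a(d-2)\le da\le g(X).
\]
Hakimi's theorem then yields $O_1$ with in-degrees $g$. Its out-degree version, applied to the same $g$, yields $O_2$.

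For (ii), suppose for contradiction that $X\ne\varnothing$, $X\cap(S\cup C)=\varnothing$, and no arc of $O_1$ enters $X$ from $V(J)\setminus(X\cup S)$. Every vertex of $X$ then has in-degree exactly $d$ in $O_1$, and each of its in-arcs comes from $X\cup S$. Counting these arcs shows that $U=X\cup S$ satisfies $|E(J[U])|\ge d|X|$. There are two cases.

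If $|U|\ge d$, sparsity gives $d|X|\le d|X|+d|S|-D$, that is, $D\le d|S|\le (2q-2)(q-1)$. This contradicts $D=(q-1)(2q-1)$.

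If $|U|<d$, each vertex of $X$ has at least $d$ neighbours inside $U$, which is impossible since $|U|-1<d$.

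The statement for $O_2$ and sets $Y$ follows symmetrically, with out-degrees in place of in-degrees.

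The main point to get right is the choice of $g$. Its total deficit on $C$ must equal exactly $D$, so that (i) holds, while vertices outside $C$ keep the full value $d$, so that (ii) holds. The computation above confirms this. The small-set case in (i) is the only spot that uses the independence of $C$, and it needs $q\ge3$ only mildly, to keep $g\ge0$.
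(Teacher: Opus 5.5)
Your proof is correct and follows the paper's argument essentially step for step. The deficits of $g$ on $C$ sum to exactly $D$. Hakimi's criterion is verified by rigidity sparsity when $|X|\ge d$ and by the independence of $C$ when $|X|\le d-1$. Part (ii) uses the same two-case count, which forces either $d|S|\ge D$ or too few possible in-neighbours.

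The only cosmetic difference is that the paper obtains $O_2$ by reversing $O_1$, rather than invoking the out-degree form of Hakimi's theorem. As a small aside on your closing remark, $q\ge3$ is really needed so that $g$ is well defined, the deficit sum formula applies and $g(c)\le d$; it is not about keeping $g\ge0$.
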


\begin{proof}
Put $n=|V(J)|$ and $D=\binom{d+1}{2}=(q-1)(2q-1)$. Since $J$ is minimally
$d$-rigid, its edge set is independent in the $d$-dimensional rigidity
matroid and $|E(J)|=dn-D$. The definition of $g$ gives
$g(C)=3q-3=d(q+1)-D$, and hence $g(V(J))=dn-D=|E(J)|$.
Moreover, $0\le g(c)\le d$ for every $c\in C$, and
$\sum_{c\in C}(d-g(c))=D$.

We verify the subset inequalities in Theorem~\ref{thm:hakimi}. Let
$X\subseteq V(J)$. If $|X|\ge d$, rigidity-matroid independence gives
\[
\jbj{|E(J[X])|\le d|X|-D
=d|X|-\sum_{c\in C}(d-g(c))
\le d|X|-\sum_{c\in X\cap C}(d-g(c))=g(X).}
\]

If $|X|\le d-1$, put $a=|X\cap C|$ and $b=|X\setminus C|$.
Since $C$ is independent and $a+b\le d-1$, we have
\[
|E(J[X])|\le ab+\binom b2
=b\left(a+\frac{b-1}{2}\right)\le db\le g(X).
\]
Thus Theorem~\ref{thm:hakimi} gives an orientation $O_1$ with the prescribed
in-degrees. Let $O_2$ be the orientation obtained by reversing every arc of
$O_1$; then $d^+_{O_2}(v)=g(v)$ for every $v\in V(J)$.

\jbj{Let $S\subseteq V(J)$ satisfy $|S|\le q-1$, and let
$X\subseteq V(J)\setminus S$ be a nonempty set that receives no arc in
$O_1$ from $V(J)\setminus(X\cup S)$. Suppose that $X\cap C=\varnothing$,
and write $x=|X|$ and $s=|S|$. Every vertex of $X$ has in-degree $d$, and
every arc entering $X$ uses an edge of $J[X\cup S]$. If $x+s\ge d$, then}
\[
\jbj{dx\le |E(J[X\cup S])|\le d(x+s)-D,}
\]
\jbj{so $ds\ge D$. This is impossible because
$ds\le d(q-1)=2(q-1)^2<D=(q-1)(2q-1)$. If $x+s\le d-1$, then every vertex
of $X$ has at most $x+s-1\le d-2$ possible in-neighbors, again contradicting
its in-degree $d$. Therefore $X\cap C\ne\varnothing$.}

\jbj{If a nonempty set $Y\subseteq V(J)\setminus S$ sends no arc in $O_2$ to
$V(J)\setminus(Y\cup S)$, then no arc of $O_1$ enters $Y$ from this set,
because $O_2$ is obtained by reversing every arc of $O_1$. The preceding
argument therefore gives $Y\cap C\ne\varnothing$.}
\end{proof}

\subsection{\texorpdfstring{Proof of Theorem~\ref{thm:orientation-explicit}}{Proof of the explicit orientation bound}}\label{subsec:explicit-orientation-proof}

\explicitorientationthm*
\begin{proof}
Let $q\ge3$, put $K=(25q^2+41q-16)/2$, and let $G$ be a $K$-connected
graph on $n$ vertices. Set $d=2q-2$, $D=\binom{d+1}{2}$, and
$\ell=q(q+1)$. Then $K=(25/2)\ell+4d$, which is an integer because
$\ell$ is even. Since $K-2d(d+1)=(9q^2+65q-24)/2>0$, the graph $G$ is
$2d(d+1)$-connected. Theorem~\ref{thm:main}, applied with $d_1=d_2=d$,
therefore gives two edge-disjoint $d$-rigid spanning subgraphs.
Delete edges from each until it is minimally $d$-rigid, and denote the
resulting graphs by $J_1$ and $J_2$.

Let $H$ be the spanning subgraph of $G$ with edge set
$E(G)\setminus(E(J_1)\cup E(J_2))$. Since $\delta(G)\ge K$ and
each $J_i$ has $dn-D$ edges,
\[
|E(H)|\ge\frac{Kn}{2}-2(dn-D)
=\frac{25}{4}\ell n+2D
\ge\frac{25}{4}\ell n-\frac{25}{2}\ell^2.
\]
Also $n\ge K+1>5\ell/2$. By Theorem~\ref{thm:linked-subgraph}, the graph
$H$ contains a $2\ell$-connected, $\ell$-linked subgraph $L$ satisfying
$|E(L)|\ge5\ell|V(L)|$.

Put $U=V(L)$ and $N=|U|$. Since $L$ is simple,
$5\ell N\le\binom N2$, so $N\ge10\ell+1$. Consider the graph
$F=J_1[U]\cup J_2[U]$. Each $E(J_i[U])$ is independent in
\jbj{$\Rmat_d(K(U))$, so $|E(F)|\le2dN-2D$. Since
$N\ge10q(q+1)+1>8q(q-1)+1$, Lemma~\ref{lem:independent} gives an independent
set $C=\{c_0,c_1,\ldots,c_q\}$ in $F$. In particular, $C$ is independent in
both $J_1$ and $J_2$.}

We next construct a path from $c_i$ to $c_j$ for every ordered pair
$(i,j)$ with $i\ne j$. These paths will have pairwise disjoint sets of
internal vertices and will meet $C$ only at their ends. For each such pair,
choose a vertex $b_{ij}\in N_L(c_j)\setminus C$, with all the $b_{ij}$
distinct. A greedy choice is possible because $\delta(L)\ge2\ell$, each
$c_j$ has at least $2\ell-q\ge\ell$ neighbors outside $C$, and there
are only $\ell$ choices to make.

By Theorem~\ref{thm:strongly-linked}, the graph $L$ is strongly
$\ell$-linked. Apply this property to the $\ell$ pairs
$\{c_i,b_{ij}\}$, one for each ordered pair $(i,j)$ with $i\ne j$.
We obtain paths $P_{ij}$ that may meet only at common prescribed ends.
Every vertex of $C$ and every $b_{ij}$ is a prescribed end, so no such
vertex is internal to any $P_{ij}$. Append the edge $b_{ij}c_j$ to
$P_{ij}$ and call the resulting path $Q_{ij}$.
The added edges are distinct and occur in none of the paths $P_{ab}$.
Indeed, an edge $b_{ij}c_j$ has two prescribed ends and could occur in
such a path only if that entire path joined these two ends, whereas the
unique prescribed pair containing $b_{ij}$ is $\{c_i,b_{ij}\}$ with
$i\ne j$.
It follows that the paths $Q_{ij}$ are edge-disjoint, their sets of
internal vertices are pairwise disjoint, and they meet $C$ only at their
ends. Orient each $Q_{ij}$ from $c_i$ to $c_j$.

Let $g$ be the function in Lemma~\ref{lem:independent-prescribed-degrees}
for this choice of $C$. Orient $J_1$ with in-degrees $g$ and $J_2$ with
out-degrees $g$. These orientations are compatible with those of the paths
$Q_{ij}$, since the paths lie in $H$. Orient every remaining edge of $G$
arbitrarily, and denote the resulting orientation by $\vec G$.

Let $S\subseteq V(G)$ have size at most $q-1$. We first show that all
vertices of $C\setminus S$ lie in one strong component of $\vec G-S$.
For distinct $c_i,c_j\in C\setminus S$, the directed path $Q_{ij}$,
together with the concatenations of $Q_{ik}$ and $Q_{kj}$ for
$k\notin\{i,j\}$, gives $q$ internally vertex-disjoint directed paths
from $c_i$ to $c_j$. At most $q-1$ of them can meet $S$, so at least one
survives. This holds for every ordered pair in $C\setminus S$, proving
the assertion.

Suppose that $\vec G-S$ is not strongly connected. Since $|S|<K$, the
underlying graph $G-S$ is connected. Hence the condensation of $\vec G-S$
is an acyclic digraph with at least two vertices and has a connected
underlying graph. It has distinct source and sink components; denote their vertex sets
by $X$ and $Y$, respectively. No arc enters $X$ from the other
vertices of $\vec G-S$; in particular, none does so in the orientation
of $J_1-S$. Lemma~\ref{lem:independent-prescribed-degrees} gives
$X\cap C\ne\varnothing$. The out-degree version applied to $J_2-S$
similarly gives $Y\cap C\ne\varnothing$. This contradicts the fact that
all vertices of $C\setminus S$ lie in one strong component.
Thus $\vec G-S$ is strongly connected for every $S$ of size at most
$q-1$, and $\vec G$ is a $q$-connected orientation of $G$.
\end{proof}

\section{Improved bounds for highly connected orientations}\label{sec:orientation}

We prove Theorem~\ref{thm:orientation} by combining two edge-disjoint
minimally $d$-rigid spanning subgraphs with a linkage in the remaining
edges. Prescribed in-degrees and out-degrees ensure that, after fewer than
$q$ vertex deletions, every source and sink strong component contains many
vertices of a common set $C$. We then realize the arcs of an auxiliary
oriented graph on $C$ by internally disjoint paths. Its arc counts across
the relevant partitions yield the final contradiction.

\subsection{Orientations of rigid subgraphs}\label{subsec:rigid-orientations}

We first show that the required in-degrees can be prescribed in a minimally
rigid spanning subgraph. Reversing all arcs gives the corresponding
orientation with prescribed out-degrees.

\begin{lemma}\label{lem:prescribed-degrees}
Let $d\ge2$ be even, let $J$ be a minimally $d$-rigid graph on at least
$d+1$ vertices, and let $C\subseteq V(J)$ have size $d+1$. Define
$g:V(J)\to\mathbb Z_{\ge0}$ by
\[
g(v)=\begin{cases}
d/2,&v\in C,\\
d,&v\notin C.
\end{cases}
\]
Then $J$ has an orientation in which every vertex $v$ has in-degree $g(v)$.
\end{lemma}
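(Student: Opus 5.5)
We plan to apply Hakimi's criterion (Theorem~\ref{thm:hakimi}), exactly as in the proof of Lemma~\ref{lem:independent-prescribed-degrees}. Put $n=|V(J)|$ and $D=\binom{d+1}{2}$. Since $J$ is minimally $d$-rigid, $E(J)$ is a base of $\Rmat_d(K(V(J)))$, so $|E(J)|=dn-D$. The function $g$ gives
\[
g(V(J))=dn-(d+1)\frac d2=dn-D,
\]
so the global condition $|E|=g(V)$ holds. Equivalently, the total deficit $\sum_{c\in C}(d-g(c))=(d+1)d/2$ equals $D$. This identity is the whole reason for choosing $|C|=d+1$ and $g=d/2$ on $C$; here $d$ even is needed only so that $g$ is integral.

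It remains to check $|E(J[X])|\le g(X)$ for every $X\subseteq V(J)$, and we split into two cases by the size of $X$.

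\emph{Case $|X|\ge d$.} Independence of $E(J[X])$ in the rigidity matroid restricted to $K(X)$, together with \eqref{eq:complete-rank}, gives
\[
|E(J[X])|\le d|X|-D=d|X|-\sum_{c\in C}\bigl(d-g(c)\bigr)\le d|X|-\sum_{c\in X\cap C}\bigl(d-g(c)\bigr)=g(X).
\]
The last inequality uses that every deficit $d-g(c)=d/2$ is nonnegative.

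\emph{Case $|X|\le d-1$.} Here we cannot use that $C$ is independent (it need not be), so we bound trivially. Write $x=|X|$, $a=|X\cap C|$ and $b=x-a$. Then $|E(J[X])|\le\binom{x}{2}$ while $g(X)=da/2+db$, and we want
\[
\frac{x(x-1)}{2}\le \frac{d}{2}a+db .
\]
Since $x-1\le d-2<d$, we have $\binom{x}{2}\le \frac{(d-2)x}{2}$. Also $g(X)\ge \frac d2 x$ because every vertex has $g\ge d/2$. Hence $\binom x2\le \frac{d-2}{2}x\le \frac d2 x\le g(X)$, and this case is immediate.

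With both cases verified, Theorem~\ref{thm:hakimi} yields the required orientation. The only genuinely delicate step is the large-$X$ case, and the exact equality $\sum_{c\in C}(d-g(c))=D$ handles it. I expect no real obstacle; the main point to double-check is the boundary $|X|=d$, where \eqref{eq:complete-rank} still applies.
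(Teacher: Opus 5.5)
Your proof is correct and is essentially the paper's proof. Both use the global count $g(V(J))=dn-D$, the rigidity-matroid bound $|E(J[X])|\le d|X|-D\le g(X)$ for $|X|\ge d$ (the paper writes the last step as $\frac d2|X\cap C|\le\frac d2(d+1)=D$), and the trivial bound $\binom{|X|}{2}\le\frac d2|X|\le g(X)$ for $|X|\le d-1$, and then apply Hakimi's criterion.
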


\begin{proof}
Put $n=|V(J)|$ and $D=\binom{d+1}{2}$. Since $J$ is minimally $d$-rigid,
its edge set is independent in the $d$-dimensional rigidity matroid and
$|E(J)|=dn-D$. Moreover,
\[
g(V(J))=d(n-d-1)+\frac d2(d+1)=dn-D=|E(J)|.
\]
It remains to verify the subset inequalities in Theorem~\ref{thm:hakimi}.
Let $X\subseteq V(J)$. If $|X|\ge d$, then rigidity-matroid independence
gives $|E(J[X])|\le d|X|-D$, while
\[
g(X)=d|X|-\frac d2|X\cap C|\ge d|X|-D.
\]
If $|X|\le d-1$, then $J$ is simple and
\[
|E(J[X])|\le\binom{|X|}{2}\le\frac d2|X|\le g(X).
\]
Thus Hakimi's criterion applies.
\end{proof}

With $d=2q+26$, the prescribed in-degrees ensure that, after fewer than
$q$ vertex deletions, every nonempty vertex set receiving no arc from the
other remaining vertices contains at least $29$ vertices of $C$. Reversing
all arcs gives the corresponding out-degree statement for sets with no
outgoing arc.

\begin{lemma}\label{lem:vertices-in-C}
Let $q\ge2$, put $d=2q+26$, and let $J$ be a minimally $d$-rigid graph on
at least $d+1$ vertices.
Let $C\subseteq V(J)$ have size $d+1$, and orient $J$ as in
Lemma~\ref{lem:prescribed-degrees}. If $S\subseteq V(J)$ satisfies
$|S|\le q-1$ and a nonempty set $X\subseteq V(J)\setminus S$ receives no arc
from $V(J)\setminus(X\cup S)$, then $|X\cap C|\ge29$.

In the orientation obtained by reversing every arc, the analogous statement
holds for a set with no outgoing arc; its out-degrees are prescribed by the
same function $g$.
\end{lemma}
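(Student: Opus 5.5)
The plan is a direct counting argument that compares in-degrees in $X$ with the number of edges available to supply them. I would write $x=|X|$, $s=|S|\le q-1$ and $c=|X\cap C|$, and use $D=\binom{d+1}{2}$. Since no arc enters $X$ from $V(J)\setminus(X\cup S)$, every arc with head in $X$ comes from an edge of $J$ with both ends in $X\cup S$ and at least one end in $X$. By Lemma~\ref{lem:prescribed-degrees}, the total in-degree of $X$ is
\[
g(X)=dx-\tfrac d2 c,
\]
so the number of such edges is at least $dx-\tfrac d2c$. I would then split into two cases according to the size of $X\cup S$, as in the proof of Lemma~\ref{lem:independent-prescribed-degrees}.

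\emph{Case $x+s\ge d$.} Here I use rigidity-matroid independence of $E(J)$: $|E(J[X\cup S])|\le d(x+s)-D$. Combining this with the in-degree count gives
\[
dx-\tfrac d2c\le d(x+s)-D,
\]
hence $\tfrac d2 c\ge D-ds=\tfrac d2(d+1-2s)$. Therefore
\[
c\ge d+1-2s\ge 2q+27-2(q-1)=29.
\]

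\emph{Case $x+s\le d-1$.} A vertex of $X\setminus C$ would need $d$ in-neighbours among fewer than $d-1$ other vertices of $X\cup S$, which is impossible. So $X\subseteq C$ and $c=x$. I would count only the edges that can enter $X$, namely those inside $X$ and those between $X$ and $S$; edges inside $S$ are irrelevant. This gives
\[
\binom{x}{2}+xs\ge \tfrac d2x.
\]
Since $x\ge1$, dividing by $x$ yields $\tfrac{x-1}{2}+s\ge\tfrac d2=q+13$, so
\[
x\ge 2q+27-2s\ge29.
\]

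For the reversed orientation, reversing every arc swaps in-degrees and out-degrees, so the out-degrees are given by $g$. A set with no outgoing arc to $V(J)\setminus(Y\cup S)$ in the reversed orientation receives no arc from that set in the original orientation. The first part therefore applies verbatim.

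The only delicate point is the small case. The naive bound $\binom{x+s}{2}$ loses too much, so the count must exclude edges inside $S$. That sharper count is what yields exactly $29$, matching the choice $d=2q+26$.
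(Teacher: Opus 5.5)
Your proposal is correct and follows the paper's proof essentially step for step. You use the in-degree count $dx-\tfrac d2|X\cap C|\le|E(J[X\cup S])|$, rigidity-matroid independence when $|X|+|S|\ge d$, and the forcing $X\subseteq C$ together with the simple-graph bound $\tfrac d2x\le\binom x2+sx$ when $|X|+|S|\le d-1$, each giving $d+1-2s\ge29$, and the reversal argument handles the out-degree version.
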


\begin{proof}
Write $x=|X|$, $s=|S|$, $a=|X\cap C|$, and
$D=\binom{d+1}{2}$. Since no arc enters $X$ from outside $X\cup S$, all
in-arcs counted at vertices of $X$ use edges of $J[X\cup S]$. Hence
\begin{equation}\label{eq:incoming-arc-count}
dx-\frac d2a\le |E(J[X\cup S])|.
\end{equation}
Suppose first that $x+s\ge d$. Rigidity-matroid independence and
\eqref{eq:incoming-arc-count} give
\[
dx-\frac d2a\le d(x+s)-D,
\]
and therefore
\[
a\ge d+1-2s\ge (2q+27)-2(q-1)=29.
\]

Suppose next that $x+s\le d-1$. If $X$ contained a vertex of
$V(J)\setminus C$, then that vertex would have prescribed in-degree $d$, but
all of its in-neighbors would lie in $(X\cup S)\setminus\{v\}$, a set of
size at most $d-2$, which is impossible. Thus $X\subseteq C$. Every vertex
of $X$ has in-degree $d/2$, and all these in-arcs have their tails in
$X\cup S$. Since $J$ is simple,
\[
\frac d2x\le\binom{x}{2}+sx.
\]
As $x>0$, this yields $x\ge d+1-2s\ge29$, as required. The out-degree
version follows by reversing every arc.
\end{proof}

\subsection{A probabilistic construction}\label{subsec:auxiliary-orientation}

The next lemma constructs an auxiliary oriented graph with prescribed bounds
on the total number of arcs and on the number of arcs in each direction
between the parts of each specified partition. In the proof of
Theorem~\ref{thm:orientation}, these arcs will be realized by paths in the
\jbj{spanning subgraph obtained by deleting the edges of $J_1$ and $J_2$}.

For an oriented graph $D$, let $A(D)$ denote its arc set. For a nonnegative
integer $N$ and $p\in[0,1]$, we write $\operatorname{Bin}(N,p)$ for the
binomial distribution with parameters $N$ and $p$.

\begin{lemma}\label{lem:auxiliary-oriented-graph}
For all sufficiently large integers $q$, put $d=2q+26$ and
$r=\left\lfloor\frac{4}{25}d(d-1)\right\rfloor$.
Let $C$ be a set of size $d+1$. There is an oriented graph $\Gamma$ on $C$
with at most $r$ arcs such that, for every partition
$C=A\dotunion B\dotunion Z$ satisfying
$|A|,|B|\ge29$ and $|Z|\le q-1$, there are at least $q$ arcs of $\Gamma$
from $A$ to $B$ and at least $q$ arcs from $B$ to $A$.
\end{lemma}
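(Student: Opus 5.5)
The plan is a random construction followed by a union bound. Put $p=0.31$, a constant slightly below $2r/(d(d+1))\to 8/25=0.32$. Choose each unordered pair of $C$ independently with probability $p$, and orient each chosen pair independently and uniformly at random. The result is an oriented graph $\Gamma$. For disjoint $A,B\subseteq C$, the number of arcs from $A$ to $B$ is then exactly $\operatorname{Bin}(|A||B|,p/2)$, and similarly from $B$ to $A$.

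The total number of arcs is $\operatorname{Bin}(\binom{d+1}{2},p)$. Its mean is about $0.155d^2$, while $r\ge\frac{4}{25}d(d-1)-1\approx0.16d^2$. By Chebyshev or Chernoff, $|A(\Gamma)|\le r$ with probability $1-o(1)$. It then remains to show that the partition condition fails with probability $o(1)$.

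Two reductions come first.

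\emph{Monotonicity.} Moving a vertex from $A$ or $B$ into $Z$ can only destroy arcs between $A$ and $B$. So it suffices to treat partitions with $|Z|=q-1$ exactly.

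\emph{Symmetry.} By symmetry we may assume $a=|A|\le b=|B|$.

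Now $a+b=d+2-q=q+28$. Hence $ab\ge29(q-1)$, with the minimum at $a=29$, and $ab$ grows roughly like $a(q-a)$ as $a$ increases. The mean of each directed count is $\mu_{a}=\frac p2 ab\ge 0.155\cdot29(q-1)\approx4.5q$. This is a constant factor above the target $q$. The target was chosen so that $29$ is the size guaranteed by Lemma~\ref{lem:vertices-in-C}.

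For the probability estimate, use the relative-entropy form of the Chernoff lower tail. Writing $q=(1-\delta)\mu$, it gives
\[
\Pr[\operatorname{Bin}<q]\le\exp\bigl(-\mu\,((1-\delta)\ln(1-\delta)+\delta)\bigr).
\]
For $a=29$ we have $1-\delta\approx 1/4.5$, so the exponent is about $-0.44\mu\approx-2q$. Union-bound over the two directions and over the choices of $(A,Z)$ with $|A|=a$; there are at most $\binom{d+1}{a}\binom{d+1-a}{q-1}$ such choices. For $a=29$ this is $\operatorname{poly}(q)\cdot\binom{2q}{q}\le e^{(\ln4+o(1))q}\approx e^{1.39q}$. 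This is beaten by $e^{-2q}$ with room to spare, and the same comparison holds for every $a$ up to a large constant. For larger $a$, the mean $\frac p2 ab$ grows like $a q$, so the failure probability decays like $e^{-c\,aq}$. Meanwhile the count is at most $3^{d+1}=e^{O(q)}$, or more precisely $\binom{d+1}{a}4^{q}$, which grows only like $e^{a\ln(eq/a)+1.39q}$. One splits into $a\le\epsilon q$, where the first estimate above suffices, and $a>\epsilon q$, where $ab=\Theta(q^2)$ and the mean dwarfs $O(q)$ choices-entropy. Summing over the $O(q)$ values of $a$ then gives total failure probability $o(1)$. So for large $q$ some outcome satisfies everything.

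The main obstacle is the regime of small $a$, near $29$. There the expected count is only a constant multiple of $q$, and the number of partitions is also exponential in $q$. The proof therefore hinges on a genuine numerical comparison between the Chernoff exponent (about $2q$) and the entropy $\ln\binom{2q}{q}\approx1.39q$. I would carry this out carefully with the exact constants $p$, $29$, and $|B|\ge q-1$. If the margin turns out too thin, the fallback is to raise $p$ closer to $0.32$, since the total arc count concentrates within $O(d)$ of its mean, so $r$ still holds.
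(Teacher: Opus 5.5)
Your proposal is correct, and its overall plan matches the paper's:
\begin{itemize}
\item the same random model (the paper puts each direction on a pair with probability $5/32$, i.e.\ $p=5/16$, where you take $p=0.31$);
\item a union bound over partitions for the two directed counts;
\item Chebyshev's inequality for the total number of arcs.
\end{itemize}
The difference is in how the union bound is closed.

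The paper does no stratification. It uses the uniform bound $|A||B|\ge 29(q-1)$ for every admissible partition and the crude count $3^{|C|}=3^{2q+27}$ of ordered partitions. It then applies the exact binomial-term estimate $\Pr(Y<q)\le q\binom{N}{q}(5/32)^q(27/32)^{N-q}$. This decays like $e^{-2.25q}$ and beats $9^q\approx e^{2.20q}$ only narrowly; that is the constant $0.953<1$.

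Your Chernoff bound gives only about $e^{-1.99q}$ at $|A|=29$. (It is the multiplicative, Poisson-type form, not the genuine relative-entropy one.) That rate would \emph{not} survive the crude $3^{d+1}$ count. So in your route two steps are genuinely necessary, not cosmetic:
\begin{itemize}
\item the reduction to $|Z|=q-1$, which is legitimate once $q\ge30$, so that vertices can be moved into $Z$ while keeping $|A|,|B|\ge29$;
\item the stratification by $a=|A|$, which cuts the critical range down to about $4^q\operatorname{poly}(q)$ partitions.
\end{itemize}

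Your version buys a comfortable margin (roughly $e^{-0.6q}$ up to polynomial factors) and robustness to the exact constants, at the price of the case analysis over $a$. The paper's version is a single uniform estimate but is numerically tight.
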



\begin{proof}
All uses of $O(\cdot)$, $\Theta(\cdot)$, and $o(1)$ in this proof refer to
$q\to\infty$, equivalently $d=2q+26\to\infty$, with absolute implicit
constants. Construct a random oriented graph $\Gamma$ on $C$ as follows.
For each unordered pair $\{u,v\}\subseteq C$, independently choose the arc
$uv$ with probability $5/32$, choose the arc $vu$ with probability $5/32$,
and choose neither arc with probability $11/16$.

Fix a partition $C=A\dotunion B\dotunion Z$ as in the statement. Since
$|C|=2q+27$ and $|Z|\le q-1$, we have $|A|+|B|\ge q+28$. For sufficiently
large $q$, the product is minimized when the smaller part has size $29$;
hence $|A||B|\ge29(q-1)$.
Let $N=29(q-1)$ and let $Y\sim\operatorname{Bin}(N,5/32)$. The number of
arcs from $B$ to $A$ has distribution
$\operatorname{Bin}(|A||B|,5/32)$. Since $|A||B|\ge N$, the probability
that fewer than $q$ arcs go from $B$ to $A$ is at most $\Pr(Y<q)$. For
$0\le j\le q-1$, consecutive terms of the binomial distribution satisfy
\[
\frac{\Pr(Y=j+1)}{\Pr(Y=j)}
=\frac{N-j}{j+1}\cdot \frac5{27}
\ge \frac{140(q-1)}{27q}>1.
\]
Consequently, the summands in $\Pr(Y<q)$ are increasing. Using
$\binom Nq\le(eN/q)^q\le(29e)^q$ and $N-q=28q-29$, we obtain
\begin{align*}
\Pr(Y<q)
\le q\binom Nq\left(\frac5{32}\right)^q
       \left(\frac{27}{32}\right)^{N-q}
\le q\left(\frac{27}{32}\right)^{-29}
\left(29e\frac5{32}\left(\frac{27}{32}\right)^{28}\right)^q.
\end{align*}
There are at most $3^{|C|}=3^{2q+27}$ ordered partitions of $C$ into
$A,B,Z$. Applying the same estimate to both directions and taking a union
bound, the probability that some admissible partition fails the required
condition is at most
\[
2\cdot3^{27}q\left(\frac{27}{32}\right)^{-29}
\left(9\cdot29e\frac5{32}
\left(\frac{27}{32}\right)^{28}\right)^q=o(1),
\]
because
$9\cdot29e\frac5{32}\left(\frac{27}{32}\right)^{28}<0.953<1$.

Let $M=|A(\Gamma)|$. An unordered pair receives an arc with probability
$5/16$. Therefore $\mathbb E M=5d(d+1)/32$ and
$\operatorname{Var}(M)=O(d^2)$. Since
$r\ge4d(d-1)/25-1$, we have
\[
r-\mathbb E M
\ge\frac4{25}d(d-1)-1-\frac5{32}d(d+1)
=\frac3{800}d^2-\frac{253}{800}d-1=\Theta(d^2).
\]
Chebyshev's inequality therefore gives
$\Pr(M>r)\le \operatorname{Var}(M)/(r-\mathbb E M)^2
=O(d^{-2})=o(1)$. Hence, with positive probability, the
required arc counts hold for every admissible partition and $M\le r$. Such an
oriented graph $\Gamma$ has the required properties.
\end{proof}

\subsection{\texorpdfstring{Proof of Theorem~\ref{thm:orientation}}{Proof of the orientation theorem}}\label{subsec:orientation-proof}

We now combine the preceding lemmas with Theorem~\ref{thm:main}, applied with
$d_1=d_2=d$, and Theorems~\ref{thm:linked-subgraph}
and~\ref{thm:strongly-linked}.

\orientationthm*
\begin{proof}
Let $q$ be sufficiently large, put $d=2q+26$ and
$D=\binom{d+1}{2}$, and set
$r=\left\lfloor4d(d-1)/25\right\rfloor$ and
$K=2d(d+1)=8q^2+212q+1404$.

Let $G$ be a $K$-connected graph on $n$ vertices. By
Theorem~\ref{thm:main} with $d_1=d_2=d$, $G$ contains two edge-disjoint $d$-rigid spanning
subgraphs. Delete redundant edges from them to obtain edge-disjoint minimally
$d$-rigid spanning subgraphs $J_1$ and $J_2$. Thus
$|E(J_i)|=dn-D$ for $i\in\{1,2\}$.

Let $H=G-(E(J_1)\cup E(J_2))$. Since $\delta(G)\ge K$,
$|E(G)|\ge Kn/2=d(d+1)n$, and hence
\begin{equation}\label{eq:residual-density}
|E(H)|\ge d(d-1)n+d(d+1).
\end{equation}
Moreover, $n\ge K+1$, and
\[
\frac52r\le\frac25d(d-1)<2d(d+1)<n.
\]
Here the first inequality follows from $r\le4d(d-1)/25$, the middle
inequality is immediate for \jbj{$d>0$}, and the last follows from
$n\ge2d(d+1)+1$. The same bound on $r$, together with
\eqref{eq:residual-density}, gives
\[
|E(H)|\ge d(d-1)n+d(d+1)
\ge\frac{25}{4}rn+d(d+1)
>\frac{25}{4}rn-\frac{25}{2}r^2.
\]

Theorem~\ref{thm:linked-subgraph} therefore gives a $2r$-connected subgraph
$L\subseteq H$ that is $r$-linked. Since
$2r\ge\frac8{25}d(d-1)-2\ge d$ for sufficiently large $d$, we have
$|V(L)|\ge2r+1\ge d+1$. Fix a set
$C\subseteq V(L)$ of size $d+1$; \jbj{it need not be independent in $L$}.

Apply Lemma~\ref{lem:auxiliary-oriented-graph} to obtain an oriented graph
$\Gamma$ on $C$ with $m=|A(\Gamma)|\le r$ and
\jbj{the stated partition property}. Since
$m\le r$ and $|V(L)|\ge2r+1$, the graph $L$ is also $m$-linked. Indeed, any
$m$-linkage problem with distinct prescribed ends can be extended to an
$r$-linkage problem by choosing $2(r-m)$ unused vertices and pairing them;
discarding the auxiliary paths from an $r$-linkage gives the required
$m$-linkage. Moreover, $|V(L)|\ge2r+1\ge2m$. Theorem~\ref{thm:strongly-linked}
implies that $L$ is strongly $m$-linked. We may therefore choose, for every
arc $uv$ of
$\Gamma$, a $u$--$v$ path $P_{uv}$ in $L$ so that these paths are pairwise
internally disjoint and meet only at common prescribed ends. Orient each
$P_{uv}$ from $u$ to $v$, \jbj{and denote the resulting directed path by
$\vec P_{uv}$}. Since $\Gamma$ is an oriented simple graph and the paths of
the linkage meet only at prescribed common ends, no undirected edge lies on
two of these paths. In particular, no edge is required \jbj{to be oriented} in both
directions, so the path orientations are compatible.

Let $g$ be the function in Lemma~\ref{lem:prescribed-degrees}. Orient
$J_1$ \jbj{as $\vec J_1$} with in-degrees $g$. Orient $J_2$
\jbj{as $\vec J_2$} by
first taking an orientation with in-degrees $g$ and then reversing every arc,
so its out-degrees are given by $g$. The paths $P_{uv}$ lie in $L\subseteq H$
and hence use no edge of $J_1\cup J_2$. \jbj{Thus the orientations of
$\vec J_1$, $\vec J_2$, and the paths $\vec P_{uv}$ are pairwise compatible.}
Orient every
remaining edge of $G$ arbitrarily, and denote the resulting orientation by
$\vec G$.

We \jbj{now} prove that $\vec G$ is $q$-connected. Let $S\subseteq V(G)$ with
$|S|\le q-1$ and suppose that $\vec G-S$ is not strongly connected. Since
$|S|<K$, the underlying graph $G-S$ is connected. The condensation of
$\vec G-S$ is therefore an acyclic digraph with at least two vertices and
has a connected underlying graph. \jbj{Choose a source component $X$ and a
sink component $Y$. They are distinct, and hence
$X\cap Y=\varnothing$.}

No arc of $\vec J_1$ enters $X$ from outside $X\cup S$.
Lemma~\ref{lem:vertices-in-C} therefore gives
$|X\cap C|\ge29$. Similarly, no arc of $\vec J_2$ leaves
$Y$ for a vertex outside $Y\cup S$, and the out-degree version of
Lemma~\ref{lem:vertices-in-C} gives $|Y\cap C|\ge29$.
Set $A=X\cap C$, $Z=S\cap C$, and $B=C\setminus(A\cup Z)$.
Then $|A|\ge29$, $|Z|\le q-1$, and $Y\cap C\subseteq B$, so $|B|\ge29$.
By the defining property of \jbj{the oriented graph $\Gamma$}, there are at
least $q$ arcs from $B$
to $A$. The corresponding directed paths $\vec P_{uv}$ have endpoints in
$C\setminus S$ and are pairwise internally disjoint. Since $|S|\le q-1$,
at least one of these $q$ directed paths avoids $S$ entirely. It starts at a
vertex of $B\subseteq C\setminus X$ and ends at a vertex of $A\subseteq X$.
The first arc of this path entering $X$ is therefore an arc of
$\vec G-S$ entering the source component $X$, a contradiction.

Thus $\vec G-S$ is strongly connected for every $S$ with $|S|\le q-1$.
Hence $\vec G$ is $q$-connected. Finally,
$K=2(2q+26)(2q+27)=8q^2+212q+1404$, which proves the theorem.
\end{proof}

\section{Concluding remarks}\label{sec:remarks}

When all $d_i=1$, Theorem~\ref{thm:main} reduces to the classical
spanning-tree packing setting.

The sharpness of Corollary~\ref{cor:equal-packings}\textup{(a)} does not
imply that the bound on $f^{*}(k)$ in part~\textup{(b)} is sharp. Indeed, the
construction in Lemma~\ref{lem:lower-bound-construction} excludes a packing
of $k$-rigid spanning subgraphs but not necessarily a packing of
$k$-connected spanning subgraphs.

For positive integers $k$ and $r$, let $h_r(k)$ be the least integer such
that every $h_r(k)$-connected graph contains $r$ pairwise
edge-disjoint spanning trees whose simultaneous deletion leaves a
$k$-connected graph. Corollary~\ref{cor:tree-removal}\textup{(b)} gives
\[
h_r(k)\le k(k+1)+2r.
\]
The sharpness construction for Corollary~\ref{cor:tree-removal}\textup{(a)}
does not give the
same lower bound for $h_r(k)$, since a $k$-connected graph need not contain
a $k$-rigid spanning subgraph. A simple lower bound follows from the complete
graph $K_{k+2r}$. If $r$ edge-disjoint spanning trees are deleted from this
graph, the remaining number of edges is
\[
\binom{k+2r}{2}-r(k+2r-1)
=\frac{k(k+2r-1)}{2}
<\frac{k(k+2r)}{2},
\]
so the remainder cannot be $k$-connected. Hence
\[
k+2r\le h_r(k)\le k(k+1)+2r.
\]

\begin{problem}
Determine the exact value of $h_r(k)$ for each fixed $r$ and all
$k\ge2$.
\end{problem}

The algorithmic approach of Garamv\"olgyi, Jord\'an, Kir\'aly and Vill\'anyi
\cite[pp.~14--15]{GJKV} extends to the mixed-dimensional setting of
Theorem~\ref{thm:main}.

\begin{proposition}\label{prop:algorithm}
Let $d_1,\ldots,d_s$ be positive integers, put
$K=\sum_{i=1}^{s}d_i(d_i+1)$, and let $G$ be a $K$-connected graph. There is
a randomized algorithm that always outputs pairwise edge-disjoint minimally
$d_i$-rigid spanning subgraphs $H_1,\ldots,H_s$ and has expected polynomial
running time in $|V(G)|$.
\end{proposition}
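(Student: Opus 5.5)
The plan is to run the standard matroid-union (matroid partition) augmenting-path algorithm, with independence oracles for $\Rmat_{d_1}(G),\ldots,\Rmat_{d_s}(G)$ implemented by randomized rank computations of rigidity matrices. We treat $E(G)$ as the common ground set of the matroids $\Rmat_{d_i}(G)$. By Edmonds' matroid union theorem (Theorem~\ref{thm:edmonds}), combined with the rank inequality of Theorem~\ref{thm:rank-defect}, the maximum size of a set partitionable into $I_1\dotunion\cdots\dotunion I_s$ with $I_i$ independent in $\Rmat_{d_i}(G)$ equals $\sum_i(d_in-D_i)$. This forces $|I_i|=d_in-D_i$, so each $I_i$ is a base, as shown in the proof of Theorem~\ref{thm:main}. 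Any exact matroid partition algorithm therefore outputs bases $I_i$. The graphs $H_i=(V,I_i)$ are then minimally $d_i$-rigid spanning subgraphs, and they are pairwise edge-disjoint.

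The matroid partition algorithm of Edmonds (or the augmenting-path algorithm of Cunningham) performs polynomially many independence queries in $|E(G)|$ and $s$. Note that $s\le K\le n$ and $|E(G)|\le\binom n2$, so everything is polynomial in $n=|V(G)|$. It remains to implement the query ``is $I\subseteq E$ independent in $\Rmat_{d}(K(V))$''. We do this by choosing a random realization $p:V\to\{1,\ldots,N\}^{d}$, with $N$ polynomially large, for example $N=n^{4}$. We then compute the rank of the rigidity matrix $R((V,I),p)$ exactly over $\mathbb Q$ by Gaussian elimination with polynomial bit size.

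At a generic point the rank is $r_d(I)$, and at any point it is at most $r_d(I)$. Suppose $I$ is independent. Then some $|I|\times|I|$ minor is a nonzero polynomial of degree at most $|I|\le dn$ in the coordinates. By the Schwartz--Zippel lemma, it vanishes at the random point with probability at most $dn/N$. A randomized query thus never declares a dependent set independent, but may err in the other direction.

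The main obstacle is making the output always correct, that is, turning this one-sided Monte Carlo procedure into a Las Vegas algorithm with expected polynomial time. The approach is to fix one random realization $p$ per dimension $d_i$ and run the whole partition algorithm on the linear matroids $M_i^p$, the row matroids of $R(K(V),p)$ in dimension $d_i$. These are genuine matroids, and each is a weak map image of $\Rmat_{d_i}$. So the algorithm is exact for them and returns a maximum partitionable set for $\bigcup_i M_i^p$.

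We then check the size. If every $I_i$ has rank $d_in-D_i$ in $M_i^p$, each $I_i$ is independent in $M_i^p$, hence independent in $\Rmat_{d_i}$, with full size. So each $I_i$ is certified to be a rigidity base. This check is deterministic given $p$, so the output is always correct.

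If the check fails, we resample $p$ and restart. To bound the failure probability, fix in advance pairwise disjoint bases $B_i$ of $\Rmat_{d_i}$, which exist by Theorem~\ref{thm:main}. Each $B_i$ stays independent in $M_i^p$ with probability at least $1-d_in/N$. In that event the maximum partitionable set has the full size $\sum_i(d_in-D_i)$, and the check succeeds. Taking a union bound over the $s\le n$ dimensions, a round fails with probability at most $1/2$ once $N\ge 2Kn$. Hence the expected number of rounds is at most $2$, and the expected running time is polynomial in $n$.
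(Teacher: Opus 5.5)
Your proposal is correct and takes essentially the same route as the paper. You use one random integer realization per dimension, and run Edmonds' matroid union algorithm exactly on the resulting linear row matroids; these are weak-map images of the generic rigidity matroids, so every accepted output is certified. You accept if and only if the total size equals $\sum_i(d_in-D_i)$, and a Schwartz--Zippel bound on the minors certifying the disjoint bases from Theorem~\ref{thm:main} gives success probability at least $1/2$ per round. The only cosmetic difference is in that last step: you take a union bound over the dimensions (valid since $\sum_i d_i\le K$, so a round fails with probability at most $Kn/N\le 1/2$). The paper instead applies Schwartz's bound once to the product $P=\prod_i P_i$, of total degree at most $\rho$, with coordinates drawn from $\{1,\ldots,2\rho\}$.
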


\begin{proof}
Put $n=|V(G)|$, $D_i=\binom{d_i+1}{2}$,
$\rho_i=d_i n-D_i$, and $\rho=\sum_{i=1}^{s}\rho_i$. Since
$n\ge K+1>d_i$ for every $i$, $\rho_i$ is the full rank of
$\Rmat_{d_i}(K(V(G)))$.

Let $\Omega=\{1,\ldots,2\rho\}$. For each $i$, choose independently a
realization $p_i:V(G)\to\Omega^{d_i}$ by selecting all coordinates
independently and uniformly from $\Omega$, and let $M_i$ be the row matroid
over $\mathbb{Q}$ of the integer matrix $R(G,p_i)$. Use the polynomial-time
matroid union algorithm of Edmonds \cite{Edmonds,EdmondsPartition}, with
exact Gaussian elimination for the independence tests in the matroids
$M_1,\ldots,M_s$, to find a largest set of edges that can be partitioned as
$I_1\dotunion\cdots\dotunion I_s$, where $I_i$ is independent in $M_i$.
If $\sum_i|I_i|=\rho$, output the spanning graphs $(V(G),I_i)$; otherwise,
repeat with new independent choices of the realizations $p_i$.

We first verify that every output is correct. If a set $I_i$ is independent
in $M_i$, some square minor of the rows of $R(G,p_i)$ indexed by $I_i$ is
nonzero. The determinant of the corresponding minor of the symbolic rigidity
matrix is therefore a nonzero polynomial. Hence the same rows are independent
at a generic realization, so $I_i$ is independent in
$\Rmat_{d_i}(G)$ and $|I_i|\le\rho_i$. If
$\sum_i|I_i|=\rho=\sum_i\rho_i$, equality must hold for every $i$.
Consequently, each $I_i$ is a base of $\Rmat_{d_i}(K(V(G)))$, and
$(V(G),I_i)$ is minimally $d_i$-rigid.

It remains to bound the probability that one trial succeeds. By
Theorem~\ref{thm:main}, fix pairwise edge-disjoint minimally $d_i$-rigid
spanning subgraphs with edge sets $B_i$, so $|B_i|=\rho_i$. For each $i$,
choose a nonzero $\rho_i\times\rho_i$ minor of the symbolic rigidity matrix
whose rows are indexed by $B_i$, and let $P_i$ be its determinant. Since the
entries of a rigidity matrix are linear in the coordinates, $P_i$ has total
degree at most $\rho_i$. The product
\[
P=\prod_{i=1}^{s}P_i
\]
is a nonzero polynomial of total degree at most $\rho$. By the polynomial
identity bound of Schwartz \cite{Schwartz},
\[
\Pr(P=0)\le \frac{\rho}{|\Omega|}=\frac12.
\]
Thus, with probability at least $1/2$, every $B_i$ is independent in the
sampled matroid $M_i$. On this event the matroid union algorithm finds a
partition of total size at least $\rho$; by the preceding paragraph it cannot
find one of larger total size. Hence the trial succeeds. The expected number
of trials is therefore at most two.

Finally, the dimension parameters do not have to be fixed. Since $G$ is
$K$-connected, $K\le n-1$, and
\[
\sum_{i=1}^{s} d_i \le \frac{K}{2} \le \frac{n-1}{2},
\quad\text{and}\quad
\rho \le n\sum_{i=1}^{s} d_i = O(n^2).
\]
Hence $s=O(n)$, every sampled coordinate has $O(\log n)$ bits, and the total
size of the matrices $R(G,p_i)$ is polynomial in $n$. Exact Gaussian
elimination gives polynomial-time independence tests, and Edmonds' matroid
union algorithm uses a polynomial number of such tests. Thus each trial, and
therefore the whole algorithm in expectation, runs in polynomial time.
\end{proof}

\jbj{\noindent{}\bf Declaration about the use of AI:} we used ChatGPT for some of the results in Section \ref{sec:packing} and the probabilistic proof in Section \ref{subsec:auxiliary-orientation}. In particular, ChatGPT suggested using
the theorem of Jackson and Jordán on the disjoint union $J_L$ to obtain a
rigidity-rank bound, and then applying Edmonds' matroid union theorem to
$G$. The authors take full responsibility for all results in the paper.

\end{spacing}

\end{document}